\theoremstyle{plain} 
\newtheorem{lemma}[equation]{Lemma} 
\newtheorem{proposition}[equation]{Proposition} 
\newtheorem{theorem}[equation]{Theorem} 
\newtheorem{corollary}[equation]{Corollary} 
\newtheorem{conjecture}[equation]{Conjecture}
\newtheorem{priorResults}{Theorem}
\theoremstyle{definition}
\theoremstyle{remark}
\numberwithin{equation}{section}
\title[Oscillatory dominated by Sparse] {Sparse Bounds for Oscillatory \\ and Random Singular Integrals}
\subjclass[2000]{Primary: 42B20 Secondary: 42B25}
\author{Michael T. Lacey}   
\address{ School of Mathematics, Georgia Institute of Technology, Atlanta GA 30332, USA}
\email {lacey@math.gatech.edu}
\thanks{Research supported in part by grant NSF-DMS 1265570 and  NSF-DMS-1600693}
\author{Scott Spencer}   
\address{ School of Mathematics, Georgia Institute of Technology, Atlanta GA 30332, USA}
\email {spencer@math.gatech.edu}
\begin{document}

\begin{abstract}
Let  $ T _{P } f (x) =   \int e ^{i P (y)} K (y) f (x-y) \; dy $, where 
$ K (y)$ is a smooth Calder\'on-Zygmund kernel on $ \mathbb R ^{n}$, and $ P$ be a polynomial.  
We show that there is a sparse bound for the bilinear form $ \langle T_P f, g \rangle$. 
This in turn easily implies  $ A_p $ inequalities.  
The method of proof is applied in a random discrete setting, yielding the first weighted inequalities  for  operators defined  on sparse sets of integers. 
\end{abstract}

\maketitle

\section{Introduction} 
Singular integral operators can be pointwise dominated by  sparse operators, which are positive localized operators, something that singular integrals are not.  This paper extends this theme to the settings of (a) oscillatory singular integrals, and (b) discrete random operators.  In both cases, we easily derive weighted inequalities. In the latter case, these are the first such weighted inequalities known.  We state our results before providing a broader context.

 Call a collection of cubes  $ \mathcal S  $ in $ \mathbb R ^{n}$ a \emph{sparse} collection if 
 there is a set $ E_Q \subset Q$ for each $ Q\in \mathcal S$ so that (a)  $ \lvert  E_Q\rvert >c \lvert  Q\rvert $ for each $ Q\in \mathcal S$, and  (b) the collection of sets $\{ E_Q \;:\; Q\in \mathcal S\}$ are pairwise disjoint.  Here $ 0< c< 1$ will be a dimensional constant that we do not track.   
Define a \emph{sparse bilinear form} to be 
\begin{equation}\label{e:sparseOp}
\Lambda _{r,s} (f,g) = \sum_{Q\in \mathcal S}  \langle f \rangle _{Q,r} \langle g \rangle _{Q,s} \lvert  Q\rvert,   \qquad 1 \leq  r, s < \infty . 
\end{equation}
Above,  $ \langle  f \rangle _{Q,r} ^{r}  :=   \lvert 3 Q\rvert ^{-1} \int _{3Q} \lvert  f\rvert ^{r}    \;dx $, 
and if $ r=s$, then $ \Lambda _{r} = \Lambda _{r,r}$.  
We frequently suppress the collection of sparse cubes $ \mathcal S$.

We consider  Calder\'on-Zygmund singular integral operators  $ T $, defined  to be an $ L ^2 (\mathbb R ^{n})$ bounded convolution operator given by 
\begin{equation*}
\langle T f, g \rangle = \int\!\int K (x-y) f (y) g (x) \; dx\,dy . 
\end{equation*}
for   compactly supported functions $ f, g$ with disjoint supports.  
Moreover, the kernel $ K (y)$ satisfies 
\begin{align}\label{e:size}
\lvert \nabla ^{t} K (x,y)\rvert \leq C_t \lvert  x-y\rvert ^{-n-t}  , \qquad x\neq y \in \mathbb R ^{n}, 
\end{align}
for   $  t \in \{ 0,1\} $.   Key examples are $ K (y) = 1/y$ in dimension one, and the Riesz transform kernels $ y/ \lvert  y\rvert ^{n+1} $, in dimension $ n$.  

Such operators are of course non-local, and involve subtle cancellative effects. It is thus something of a surprise that such operators are dominated by sparse operators, which have none of these features.  This is a special case of \cites{150105818,14094351,150805639}.

\begin{priorResults}\label{t:czo}    For each Calder\'on-Zygmund singular integral  operator $ T$ and  bounded compactly supported function $ f$, there is a sparse operator $ \Lambda  = \Lambda _{T, f}$  so that 
$
\lvert  T f \rvert \lesssim \Lambda _{1} f 
$. 
\end{priorResults}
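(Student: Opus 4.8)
The plan is to prove a stronger \emph{pointwise, local} statement by a stopping-time recursion, and then pass to arbitrary bounded compactly supported $f$ via finitely many shifted dyadic grids. Recall that there are $3^n$ dyadic systems $\mathcal D^{(1)},\dots,\mathcal D^{(3^n)}$ with the property that every cube is contained in some $Q$ from one of them with $\ell(Q)$ comparable; this is exactly what the dilate $3Q$ in the definition of $\langle f\rangle_{Q,r}$ is designed to accommodate. So it suffices to fix one system $\mathcal D$ and a cube $Q_0\in\mathcal D$ with $\ell(Q_0)$ large compared to $\operatorname{diam}\operatorname{supp} f$, and produce a sparse $\mathcal S\subset\mathcal D$ with $Q_0\in\mathcal S$ such that $\mathbf 1_{Q_0}\,|T(f\mathbf 1_{3Q_0})|\lesssim\sum_{Q\in\mathcal S}\langle f\rangle_{Q,1}\mathbf 1_Q$ a.e.; summing the finitely many such estimates over a tiling of $\mathbb R^n$ controls $|Tf|$.

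The engine is a one-step \emph{recursion lemma}: given $Q_0$, there is a pairwise disjoint family $\{Q_j\}$ of dyadic subcubes of $Q_0$ with $\sum_j|Q_j|\le\tfrac12|Q_0|$ such that, for a.e. $x\in Q_0$,
\begin{equation*}
|T(f\mathbf 1_{3Q_0})(x)|\;\lesssim\;\langle f\rangle_{Q_0,1}\,\mathbf 1_{Q_0}(x)\;+\;\sum_j|T(f\mathbf 1_{3Q_j})(x)|\,\mathbf 1_{Q_j}(x).
\end{equation*}
To build the $Q_j$, introduce the local grand maximal truncation $\mathcal M_{T,Q_0}f(x)=\sup\operatorname{ess\,sup}_{\xi\in Q}|T(f\mathbf 1_{3Q_0\setminus 3Q})(\xi)|$, the supremum over dyadic $Q\ni x$ with $Q\subsetneq Q_0$, and let $\{Q_j\}$ be the maximal dyadic subcubes of $Q_0$ on which $\langle f\rangle_{\cdot,1}>C\langle f\rangle_{Q_0,1}$ or $\mathcal M_{T,Q_0}f>C\langle f\rangle_{Q_0,1}$. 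Since the Hardy–Littlewood maximal function and $\mathcal M_T$ are both weak type $(1,1)$ — the latter being a consequence of the $L^2$-boundedness of $T$ together with the size/smoothness estimate \eqref{e:size}, via the Calder\'on–Zygmund theory and a Cotlar-type pointwise inequality — taking $C$ large forces $\sum_j|Q_j|\le\tfrac12|Q_0|$.

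The conclusion of the recursion lemma is then checked in two cases. For a.e. $x\in Q_0\setminus\bigcup_jQ_j$ no dyadic cube $Q\ni x$ with $Q\subsetneq Q_0$ is selected, so $\mathcal M_{T,Q_0}f(x)\le C\langle f\rangle_{Q_0,1}$, and at Lebesgue points $T(f\mathbf 1_{3Q_0})(x)$ is the limit of its truncations $T(f\mathbf 1_{3Q_0\setminus 3Q})(x)$ along the chain $Q\downarrow\{x\}$, giving $|T(f\mathbf 1_{3Q_0})(x)|\lesssim\langle f\rangle_{Q_0,1}$. For $x\in Q_j$ one writes $T(f\mathbf 1_{3Q_0})=T(f\mathbf 1_{3Q_j})+T(f\mathbf 1_{3Q_0\setminus 3\widehat{Q_j}})+T(f\mathbf 1_{3\widehat{Q_j}\setminus 3Q_j})$, where $\widehat{Q_j}$ is the dyadic parent of $Q_j$: since $\widehat{Q_j}$ was \emph{not} selected, the middle term is $\lesssim\langle f\rangle_{Q_0,1}$ by the definition of $\mathcal M_{T,Q_0}$, and the last term, an integral over the annular region $3\widehat{Q_j}\setminus 3Q_j$ of measure $\lesssim|Q_j|$ where the kernel is $\lesssim\ell(Q_j)^{-n}$ by \eqref{e:size}, is $\lesssim\langle f\rangle_{\widehat{Q_j},1}\lesssim\langle f\rangle_{Q_0,1}$. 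Iterating the lemma — applying it to $Q_0$, then to each $Q_j$, and so on — produces a tree $\mathcal S$ of cubes; the sets $E_Q:=Q\setminus\bigcup\{Q'\in\mathcal S:Q'\subsetneq Q\text{ maximal}\}$ satisfy $|E_Q|\ge\tfrac12|Q|$ and are pairwise disjoint, so $\mathcal S$ is sparse, and telescoping the recursion down $\mathcal S$ yields the claimed pointwise bound.

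I expect the main obstacle to be precisely the weak type $(1,1)$ bound for the grand maximal truncation operator $\mathcal M_T$, together with the Cotlar-type comparison of $T(f\mathbf 1_{3Q_0})$ with its truncations needed to handle the good set at Lebesgue points; once that tool is in hand, the stopping-time selection, the annular kernel estimate, and the bookkeeping for the iteration and the shifted grids are all routine.
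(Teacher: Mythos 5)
The paper does not prove Theorem~\ref{t:czo}: it is quoted as a prior result, with the proof deferred to \cites{14094351,150805639,150105818}. Your proposal is, in substance, the stopping-time argument of those references (closest to \cite{150105818} and to Lerner's refinement in \cite{MR3484688}): localization to a single cube via the shifted dyadic grids, selection of stopping cubes by the level sets of the local maximal function and of the grand maximal truncation $\mathcal M_{T,Q_0}$, the weak $(1,1)$ bound for the latter, and iteration of the one-step recursion. Every step you describe is the standard one, and the measure estimate for the stopping cubes, the annular kernel bound for $x\in Q_j$ using \eqref{e:size}, and the sparseness of the iterated family are all correct as written.

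The one place where your write-up is looser than the actual argument is the good-set estimate. For a Calder\'on--Zygmund operator defined, as in this paper, only as an $L^2$-bounded operator whose bilinear form has a kernel representation off the diagonal, the truncations $T(f\mathbf 1_{3Q_0\setminus 3Q})(x)$ need not converge to $T(f\mathbf 1_{3Q_0})(x)$ as $Q\downarrow\{x\}$; the remainder $T(f\mathbf 1_{3Q})(x)$ has no pointwise kernel representation at all, so ``taking the limit along the chain of cubes'' is not available. The standard remedy is to replace the pointwise limit by $\operatorname{ess\,inf}_{\xi\in Q}\lvert T(f\mathbf 1_{3Q})(\xi)\rvert\lesssim \lVert T\rVert_{L^1\to L^{1,\infty}}\langle f\rangle_{3Q,1}$ via Kolmogorov's inequality, which yields the a.e.\ bound $\lvert T(f\mathbf 1_{3Q_0})(x)\rvert\lesssim \lvert f(x)\rvert+\mathcal M_{T,Q_0}f(x)$ on the good set. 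The extra term $\lvert f(x)\rvert$ does not vanish, but it is harmless: $\lvert f\rvert\le Mf$ a.e., and $Mf$ is itself dominated by a sparse operator (equivalently, one adds $\lvert f\rvert\mathbf 1_{E_Q}$ to each sparse summand). With that adjustment your argument matches the cited proofs.
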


An immediate corollary are weighted inequalities that are sharp in the $ A_p$ characteristic. See \cites{MR3085756,150105818,14094351}.

We consider polynomials of a fixed degree $ d$, given by 
$  P  (x,y) = \sum_{\alpha , \beta  \;:\; \lvert  \alpha \rvert + \lvert  \beta \rvert \leq d } \lambda _{\alpha, \beta  } x ^{\alpha } y ^{\beta }$, 
 where we use the usual multi-index notation. 
 The polynomial modulated Calder\'on-Zygmund operators are 
\begin{equation} \label{e:TK}
T_P f (x) = \int e ^{i P  (x,y)}K (y) f (x-y) \; dy . 
\end{equation}
The $ L ^{p}$ result below  is a special case of the results of Ricci and Stein \cites{MR822187,MR890662},  and the weak-type result is due to Chanillo and Christ \cite{MR883667}.

\begin{priorResults}\label{t:fixedPoly}  For $ 1 < p < \infty $,  the operator $ T_P$ is bounded on $ L ^{p}$, that is 
\begin{equation*} 
\lVert T_P  \;:\;  L^{p} \mapsto L ^{p}\rVert  \lesssim  1, 
\end{equation*}
where the implied constant depends on the degree of $ P$, and in particular is independent of $ \lambda $.  
Moreover, $ T_P$  maps $ L ^{1}$ to weak $ L ^{1}$, with the same bound.  
\end{priorResults}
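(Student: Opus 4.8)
The plan is to recover this from the two classical pillars behind it: a bound on $ L ^2 (\mathbb R ^{n})$ that is \emph{uniform in the coefficients} of $ P$, obtained from oscillatory‑integral estimates, and a Calder\'on--Zygmund argument adapted to the oscillation that upgrades $ L ^2 $ to $ L ^{p}$ and to the weak endpoint. First I would reduce the phase modulo modulations: every monomial of $ P$ depending on $ x$ alone contributes a unimodular factor $ e ^{i P_1 (x)}$ which drops out of all $ L ^{p}$ norms, and a linear pure‑$ y$ term conjugates $ T$ by a modulation and so also changes no norm, so one may assume $ P$ has no pure‑$ x$ terms and no terms of total degree $ \leq 1$; in particular $ \deg P \geq 2$, and the base case $ \deg P \leq 1$ of the forthcoming induction is already in hand. (When $ P$ has, moreover, \emph{only} pure‑$ y$ terms, $ T_P$ is the oscillatory convolution $ f \mapsto (e ^{i P } K)\ast f$, whose $ L ^2 $ bound is a van der Corput estimate for $ \widehat{e ^{i P } K}$ and whose $ L ^{p}$ bound follows; one can treat this as a warm‑up.) After a linear change of variables and the dyadic decomposition $ K = \sum _{j} K \cdot \chi (2 ^{-j}\cdot)$, one is in position to run an induction on $ d = \deg P$.

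The analytic core is the $ L ^2 $ bound. Its engine is a quantitative van der Corput / sublevel‑set estimate: for a real polynomial $ \phi $ of degree $ \leq d$ on a unit cube, $ \bigl| \int e ^{i \phi } \psi \bigr| $ and $ \bigl| \{ \lvert \phi \rvert < \varepsilon \} \bigr| $ are bounded by a negative, respectively positive, power of the largest coefficient of $ \phi $, with constants depending only on $ d$ and $ \psi $, and --- crucially --- insensitive to lower‑order perturbations of $ \phi $. One feeds this into a $ T_P T_P ^{\ast }$ / almost‑orthogonality argument across the dyadic scales of $ K$, organized by the induction on $ \deg P$ with a dichotomy on the size of the top‑degree coefficients of $ P$: when they are small, $ P$ is effectively of lower degree and the inductive hypothesis applies; when they are large, van der Corput produces a genuine gain. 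This yields $ \lVert T_P \rVert _{2 \to 2} \lesssim 1$ uniformly in the coefficients --- the Ricci--Stein estimate.

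Given the $ L ^2 $ bound, the weak $ (1,1)$ estimate follows by the argument of Chanillo and Christ, and then $ 1 < p \leq 2$ by interpolation and $ 2 \leq p < \infty $ by duality (since $ T_P ^{\ast }$ is again an oscillatory singular integral of the same type). Run a Calder\'on--Zygmund decomposition $ f = g + \sum _{Q} b_Q$ at height $ \lambda $: the good function $ g$ is controlled by the $ L ^2 $ bound. For the bad part, on each stopping cube $ Q$ one uses $ \int b_Q = 0$ to subtract the kernel value at the center $ c_Q$; the difference $ K (x-y) - K (x-c_Q)$ is handled by the smoothness estimate \eqref{e:size}, while the oscillatory difference $ e ^{i P (x,y)} - e ^{i P (x,c_Q)}$ --- for which the naive pointwise bound fails, as $ \nabla _y P$ carries the large coefficients --- is estimated by splitting $ \mathbb R ^{n} \setminus CQ$ into annuli and running, annulus by annulus, the same dichotomy in the $ x$‑variable: where $ x \mapsto P (x,y)$ oscillates rapidly one integrates by parts to gain smallness, and where it oscillates slowly the effective phase has lower degree and is handled by the induction on $ \deg P$.

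I expect the main obstacle throughout to be precisely the uniformity in the coefficients $ \lambda $. It is what rules out treating $ e ^{i P (x,y)} K (x-y)$ as a Calder\'on--Zygmund kernel --- its smoothness in $ y$ degrades as $ \lvert \lambda \rvert \to \infty $ --- so Theorem~\ref{t:czo} is not available and every bound must be extracted from the oscillation itself. Reconciling the van der Corput / sublevel‑set estimates, which only register the top‑degree behavior of the phase, with the simultaneous need to control lower‑order terms is what forces the induction on the degree, and it is the bookkeeping of that induction --- in both the $ L ^2 $ step and the weak‑type step --- that I expect to be the most technically demanding part.
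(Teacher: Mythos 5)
This statement is one of the paper's quoted prior results (Theorem~\ref{t:fixedPoly}): the paper gives no proof of it, but simply cites Ricci--Stein \cites{MR822187,MR890662} for the $L^p$ bounds and Chanillo--Christ \cite{MR883667} for the weak $(1,1)$ endpoint, so there is no internal argument to compare yours against. That said, your outline is a faithful reconstruction of the architecture of those cited proofs: the reduction removing pure-$x$ and linear terms, the dyadic decomposition of $K$, the uniform $L^2$ bound via $T_PT_P^{\ast}$ almost-orthogonality fed by van der Corput/sublevel-set estimates, the induction on $\deg P$ with the dichotomy on the size of the top-degree coefficients, and the Calder\'on--Zygmund decomposition in which the mean-zero cancellation of $b_Q$ must be traded against the oscillation of the phase at scales where $\nabla_y P$ is large. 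You have correctly identified that uniformity in the coefficients is the whole difficulty and that it forces the induction.

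The caveat is that, as written, this is a roadmap rather than a proof: the two steps you defer are precisely the entire technical content of the source papers. In the $L^2$ step you need the sublevel-set/van der Corput input to produce decay that is \emph{summable over pairs of dyadic scales} in the $T_PT_P^{\ast}$ argument, and the bookkeeping of which coefficients survive the inductive reduction (after rescaling each annulus to unit size) is delicate. In the weak-type step, the pointwise integration by parts in $x$ that you propose for the oscillatory difference is not quite how Chanillo--Christ proceed --- they key the dichotomy on the scale of $|x-c_Q|$ relative to the coefficient sizes and use $L^2$ estimates on annuli where cancellation of $b_Q$ is unavailable --- and one must check that the resulting exceptional sets have controlled measure summed over the stopping cubes. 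None of this invalidates your plan, but each of these points is where a write-up could genuinely fail, so you should either carry them out or, as the paper does, cite \cites{MR822187,MR890662,MR883667} and move on.
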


The dependence on the polynomial being felt only through the degree of $ P$ is important to the application of these bounds to the setting of nilpotent groups, like the Heisenberg group, see \cite{MR890662}.  This dependence continues to hold in the Theorems below.

\begin{theorem}\label{t:main1}  
For each $ 1< r < 2$  Calder\'on-Zygmund operator $ T$,   polynomial $ P = P(y)$ of degree $ d$ and bounded  supported functions $f, g $ there is a  bilinear form  $ \Lambda _{r} $ so that 
\begin{equation*}
\lvert  \langle T _{P} f ,g  \rangle\rvert  \lesssim \Lambda _{r} (f,g). 
\end{equation*}
The implied constant  depends only on $ T$, the degree $ d$, and dimension $ n$ and choice of $ r>1$. 
\end{theorem}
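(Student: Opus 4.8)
The plan is to prove Theorem~\ref{t:main1} by a stopping-time decomposition adapted to the oscillatory setting, in which the polynomial phase is controlled on each stopping cube by comparing $T_P$ with a \emph{non-oscillatory} Calder\'on--Zygmund operator, to which Theorem~\ref{t:czo} applies. The core mechanism is a single-scale estimate: if $Q$ is a cube and one localizes $f$ to $3Q$ and $g$ to $Q$, then on $Q$ the phase $P(y)$ can, after subtracting the Taylor polynomial of $P$ centered appropriately, be split into a ``low-degree'' part that behaves like a harmless modulation and a ``genuinely oscillating'' part whose contribution is bounded by the $L^r$-averaging functional $\langle f\rangle_{Q,r}\langle g\rangle_{Q,s}\lvert Q\rvert$ for any $r>1$. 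Here is where the hypothesis $r>1$ (rather than $r=1$) is essential and where Theorem~\ref{t:fixedPoly}, and in particular its $L^p$-boundedness with bounds depending only on $\deg P$, will be invoked: the oscillatory operator is $L^p$-bounded for all $1<p<\infty$ uniformly in the coefficients $\lambda$, so one gets an $L^r\to L^{r}$ estimate on each scale that one can sum against the sparse collection.

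The steps, in order: (1) Reduce to a dyadic model by the standard three-lattice trick, so that it suffices to produce a sparse form indexed by a dyadic grid. (2) Run a stopping-time argument on the averages $\langle f\rangle_{Q,r}$ (and $\langle g\rangle_{Q,s}$): select stopping cubes $\mathcal S$ where the $L^r$-average roughly doubles; the stopping cubes form a sparse collection by the usual maximal-function/Carleson argument. (3) For a fixed stopping cube $Q$ with children-free region $E_Q$, estimate $\lvert\langle T_P(f\mathbf 1_{3Q}), g\mathbf 1_{Q})\rangle\rvert$. Decompose the phase: write $P(y)=P_{\le k}(y)+P_{>k}(y)$ where $k$ is chosen relative to $Q$ so that $e^{iP_{\le k}(y)}$ varies slowly across $3Q$; then $e^{iP_{\le k}}$ times $K$ is (morally) again a CZ kernel, handled by Theorem~\ref{t:czo}, while the high-degree remainder, where oscillation genuinely helps, is controlled by Theorem~\ref{t:fixedPoly} applied on $L^r(3Q)$, giving a bound $\lesssim \langle f\rangle_{Q,r}\langle g\rangle_{Q,r}\lvert Q\rvert$. (4) Sum over $Q\in\mathcal S$; sparseness of $\mathcal S$ together with the telescoping of stopping averages yields $\lvert\langle T_P f,g\rangle\rvert\lesssim\Lambda_r(f,g)$, with $s=r$. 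The constant depends only on $T$, $d$, $n$, and $r$.

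The main obstacle is step~(3): making precise the assertion that after removing a suitably truncated Taylor polynomial of the phase, the remaining operator simultaneously (a) retains enough Calder\'on--Zygmund structure to apply Theorem~\ref{t:czo} to the ``slow'' part and (b) allows one to invoke the uniform-in-$\lambda$ $L^p$ bound of Theorem~\ref{t:fixedPoly} for the ``fast'' part at the scale of $Q$, all while keeping the constants independent of the polynomial coefficients. Concretely, one must choose the truncation level $k$ and the recentering so that on a dilate of $Q$ the normalized phase has controlled derivatives, so that $T_{P_{\le k}}$ differs from a standard CZ operator by error terms with acceptable kernel bounds, and so that a rescaling reduces $T_{P_{>k}}$ on $3Q$ to a fixed-scale oscillatory operator with coefficients still of bounded ``size'' in the relevant sense. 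Managing the interplay of the polynomial degree, the number of monomials, and the scale of $Q$ — so that everything telescopes and sums — is the technical heart of the argument; the sparse and stopping-time bookkeeping in steps (1)--(2) and (4) is routine by now. The role of $r>1$ is to absorb the loss coming from the oscillatory $L^p$ estimate, which degrades as $p\to 1$; this is precisely why the theorem cannot be pushed to $r=1$ by this method.
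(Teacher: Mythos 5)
There is a genuine gap, and it sits exactly where you flag the ``technical heart'': step (3). The single-scale estimate you need is an \emph{improving} bound with a \emph{gain in the scale}, namely $\lVert I_Q f\rVert_{r'}\lesssim (\ell Q)^{-\eta}\lvert Q\rvert^{-1+2/r'}\lVert f\mathbf 1_Q\rVert_r$, and neither ingredient can be extracted from Theorem~\ref{t:fixedPoly}. That theorem is an $L^r\to L^r$ bound with constant $O(1)$; applied on $3Q$ together with H\"older it yields $\langle f\rangle_{Q,r}\langle g\rangle_{Q,r'}\lvert Q\rvert$ --- note the exponent $r'$ on $g$, which is a strictly weaker conclusion than the $(r,r)$ form claimed in the theorem and does not give the $A_p$ corollary --- and, worse, it comes with no decay in $\ell Q$, so the sum over the (non-sparse) family of intermediate scales inside each stopping cube does not close. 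The actual source of the gain is oscillatory decay at a single scale: the kernel $K_Q$ of $I_Q^{\ast}I_Q$ is $O(\lvert Q\rvert^{-1})$ only on a sublevel set $Z_Q$ of measure $\le(\ell Q)^{-\epsilon}\lvert Q\rvert$ (a van der Corput--type estimate, Lemma~\ref{l:K}, from Stein--Wainger), giving $\lVert I_Q\rVert_{2\to2}\lesssim(\ell Q)^{-\epsilon/2}$; Riesz--Thorin against the trivial $\lVert I_Qf\rVert_\infty\lesssim\lvert Q\rvert^{-1}\lVert f\mathbf 1_Q\rVert_1$ then produces both the exponent $r'$ on the output and the factor $2^{-\eta k}$. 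This interpolation is also the true reason $r>1$ is forced ($\eta\to0$ as $r\to1$), not a degeneration of the Ricci--Stein constants near $p=1$, which in fact persist down to weak $(1,1)$ by Chanillo--Christ.

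Two further points. First, the phase splitting $P=P_{\le k}+P_{>k}$ does not decompose the operator ($e^{i(P_{\le k}+P_{>k})}\ne e^{iP_{\le k}}+e^{iP_{>k}}$), and since $P=P(y)$ the operator is a convolution, so there is no recentering to exploit; after normalizing $\lVert P\rVert=1$ by dilation the only useful split is at unit scale: for $\lvert y\rvert\le2$ one has $\lvert e^{iP(y)}-1\rvert\lesssim\lvert y\rvert$, so $T_P-T$ is dominated by $M$ and Theorem~\ref{t:czo} disposes of all small scales at once, while for $\ell Q\ge1$ every monomial genuinely oscillates and a low/high--degree split buys nothing. Second, the stopping-time scaffolding in your steps (1), (2) and (4) is unnecessary once the single-scale gain is in hand: the paper simply accepts a sum over \emph{all} dyadic cubes of volume at least $1$, which is far from sparse, and the geometric factor $2^{-\eta k}$ makes that sum dominated by a sparse form (this is the content of \eqref{e:s}). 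So the proposal is not a correct alternative route; the missing idea is the $TT^{\ast}$ sublevel-set estimate and the resulting localized $L^r\to L^{r'}$ bound with scale decay.
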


The bound above continues to hold for polynomials $ P $ of two variables, but we suppress the details, as the estimate above can 
most likely be improved. And, as written is quite easy to prove, yet yields a non-trivial corollary. 

\begin{corollary}\label{c:wtd}  For $ 1 < p < \infty $,  the operator $ T_P$, where $ P = P (y)$ is of degree $ d$, 
is bounded on $ L ^{p} (w)$,   
where $ w $ is a Muckenhoupt weight $ w\in A_p $.   
\end{corollary}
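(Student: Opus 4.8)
Corollary \ref{c:wtd} should follow from Theorem \ref{t:main1} by the now-standard machinery that converts a sparse domination of a bilinear form into quantitative weighted norm inequalities. First I would recall that a sparse bound $\lvert\langle T_P f,g\rangle\rvert \lesssim \Lambda_r(f,g)$, valid for all bounded compactly supported $f,g$, implies that $T_P$ extends to a bounded operator on $L^p(w)$ for every $w\in A_p$ with $p>r$. Since Theorem \ref{t:main1} gives the sparse bound for \emph{every} $r>1$ (with $1<r<2$), and for any fixed $p>1$ one may choose $r\in(1,\min\{p,2\})$, the restriction $r<2$ is harmless: we get boundedness on $L^p(w)$ for the full range $1<p<\infty$ and all $w\in A_p$. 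So the corollary is an immediate consequence once the sparse-to-weighted implication is invoked.

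**Key steps.** The sparse-to-weighted passage itself I would carry out (or cite) as follows. (i) Fix $p$ and $r\in(1,p)$, and set $s = r$ as well — actually it is cleaner to use $\Lambda_{r,r}$ as in the theorem. (ii) By duality it suffices to bound $\Lambda_r(f,g)$ when $f\in L^p(w)$, $g\in L^{p'}(w^{1-p'})=L^{p'}(\sigma)$ with $\sigma$ the dual weight. (iii) For each cube $Q\in\mathcal S$ write $\langle f\rangle_{Q,r}\langle g\rangle_{Q,r}\lvert Q\rvert$ and insert the $A_p$ weight: estimate $\langle f\rangle_{Q,r}\le [w]_{A_p}^{1/p}\,(\text{some }A_\infty\text{-type average of }f \text{ w.r.t. }w)$ after checking that $w\in A_{p/r}$ — this uses the self-improvement property of $A_p$ (an $A_p$ weight lies in $A_{p/r}$ for $r$ sufficiently close to $1$, by the reverse Hölder / open-property of Muckenhoupt classes). (iv) Then sum over $Q\in\mathcal S$ using the pairwise-disjoint sets $E_Q$, Hölder's inequality, and the boundedness of the weighted maximal function (or the dyadic Carleson embedding theorem) on $L^p(w)$ and $L^{p'}(\sigma)$. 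This produces $\Lambda_r(f,g)\lesssim [w]_{A_p}^{\max\{1,(p-1)^{-1}\}/\text{(something)}}\lVert f\rVert_{L^p(w)}\lVert g\rVert_{L^{p'}(\sigma)}$, and the corollary (without tracking the sharp exponent) follows. All of this is exactly the argument in \cites{MR3085756,150105818,14094351}, which the excerpt already invokes for Theorem \ref{t:czo}; I would simply remark that the same reasoning applies verbatim with $\Lambda_1$ replaced by $\Lambda_r$.

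**Main obstacle.** There is essentially no genuine obstacle: the content is entirely in Theorem \ref{t:main1}. The one point requiring a word of care is that the sparse form here is $\Lambda_r$ with $r>1$ rather than the $\Lambda_1$ of Theorem \ref{t:czo}, so the weighted bound one gets is not the sharp $A_p$ bound but only qualitative $L^p(w)$ boundedness for $w\in A_p$ — which is all the corollary claims. This is why one needs $A_p\subset A_{p/r}$, i.e., the open property of Muckenhoupt classes, to absorb the $L^r$-averages; this is the only non-formal input beyond the sparse bound itself. I would therefore present the proof as: "Fix $1<p<\infty$ and $w\in A_p$. Choose $r$ with $1<r<\min\{p,2\}$ so small that $w\in A_{p/r}$, possible by the openness of $A_p$. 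By Theorem \ref{t:main1} it suffices to show $\Lambda_r : L^p(w)\times L^{p'}(\sigma)\to\mathbb R$ is bounded, and this is the standard sparse-form estimate; see \cites{MR3085756,150105818,14094351}." That is the whole proof.
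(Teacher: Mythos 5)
Your argument is correct in substance, but it follows a genuinely different route from the paper. You apply the generic sparse-to-weighted machinery directly to the $\Lambda_r$ form of Theorem~\ref{t:main1}, choosing $r>1$ so close to $1$ that the weight conditions self-improve. The paper never invokes Theorem~\ref{t:main1} here: it returns to the refined decomposition \eqref{e:s} of Theorem~\ref{t:s}, estimates the genuine $\Lambda_1$ sparse form by the sharp bound of Theorem~\ref{t:wts} (\cite{MR3085756}), and treats the remaining term --- which is a sum over \emph{all} dyadic cubes of volume at least one, not a sparse collection --- scale by scale via Proposition~\ref{p:scale}, summing the resulting geometric series coming from the gain $\lvert Q\rvert ^{-\eta}$; the constant at each fixed scale is $[w]_{A_p}^{1/p}[w]_{RH_r}[\sigma]_{RH_r}$, which is finite for some $r>1$ by the self-improvement facts recalled in \S\ref{s:wtd}. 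The paper's route keeps the sharp $\Lambda_{1}$ estimate on the main term and therefore yields the quantitative bound \eqref{e:sharp?}; your route is shorter and delivers the qualitative statement, which is all the corollary claims, but would give a worse (and harder to track) dependence on $[w]_{A_p}$ if made quantitative. Two caveats on your version. First, the references \cites{MR3085756,150105818,14094351} prove the weighted bound only for $\Lambda_{1,1}$; for $\Lambda_{r,r}$ with $r>1$ the argument is not ``verbatim'' the same, and the correct citation is Bernicot--Frey--Petermichl \cite{MR3531367}, of which Proposition~\ref{p:scale} is a simplified single-scale version. Second, the hypothesis you need is not only $w\in A_{p/r}$: the $r$-average of $g$ forces in addition a reverse H\"older-type condition (equivalently $\sigma=w^{1-p'}\in A_{p'/r}$, i.e.\ $w\in A_{p/r}\cap RH_{(r'/p)'}$). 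Both conditions do hold for $r$ sufficiently close to $1$, by applying the openness of the Muckenhoupt classes to $w$ and to $\sigma$ separately, so your proof closes --- but this second condition must be stated, since $w\in A_{p/r}$ alone does not control the factor $\langle g\rangle_{Q,r}$.
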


Weak-type and weighted estimates for oscillatory singular integrals have been studied in this and more general contexts by various authors, see for instance \cites{MR2900003, MR2910762, MR2949870, MR2115460,  MR1782909}.  
Y.~Ding and H.~Liu \cite{MR2900003} were interested in $ L ^{p} (w)$ inequalities for more general operators $ T$. 
The approach of these authors entails many complications.

The method of proof of Theorem~\ref{t:main1} is  very simple. And, so we suspect that stronger results are possible. 
For instance, this Conjecture would imply nearly sharp $ A_p$ bounds, for all $ 1< p < 2$.  
\begin{conjecture}\label{j:oscillatory} 
 For $ 1 < r < \infty $,  the operator $ T_P$, where $ P = P (y)$ is of degree $ d$, for each bounded compactly supported function $ f$, there is a sparse operator $ \Lambda _{1,r}$ so that 
 \begin{equation*}
\lvert  \langle T _{P} f ,g  \rangle\rvert  \lesssim \Lambda _{1,r} (f,g). 
\end{equation*}
\end{conjecture}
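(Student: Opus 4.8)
\emph{Reductions.} The plan is to remove the oscillation one dyadic scale at a time and reduce to Theorem~\ref{t:czo}. Since $P=P(y)$, write $P=\ell+Q$ with $\deg\ell\le 1$ and $Q$ the part of degree $\ge 2$. Conjugating by the modulation $f\mapsto e^{i\ell}f$ turns $T_P$ into the convolution operator with kernel $e^{iQ(y)}K(y)$, and since modulations are $L^p$-isometries leaving $|f|$, $|g|$ (hence every sparse form) unchanged, one may assume $P$ has no term of degree $<2$. Decompose $K=\sum_{j\in\mathbb Z}K_j$ with $\operatorname{supp}K_j\subset\{|y|\sim 2^j\}$ and $\lVert\nabla^tK_j\rVert_\infty\lesssim 2^{-j(n+t)}$, $t\in\{0,1\}$, and set $T_P^jf=(e^{iP}K_j)*f$, so $T_P=\sum_jT_P^j$. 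Put $\Lambda_j=\sum_{2\le|\alpha|\le d}|\lambda_\alpha|2^{j|\alpha|}$; as every $|\alpha|\ge 2$ one has $\Lambda_{j+1}\ge 4\Lambda_j$, so $\{j:\Lambda_j\le 1\}=(-\infty,j_0]$ with $\sum_{j\le j_0}\Lambda_j\lesssim 1$ and $\sum_{j>j_0}\Lambda_j^{-\varepsilon}\lesssim 1$ for every $\varepsilon>0$.

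\emph{Low frequencies ($j\le j_0$).} Here $|e^{iP(y)}-1|\lesssim\Lambda_j$ and $|\nabla(e^{iP}K_j)|\lesssim(1+\Lambda_j)2^{-j(n+1)}\lesssim 2^{-j(n+1)}$ on $\{|y|\sim 2^j\}$, so $\sum_{j\le j_0}e^{iP}K_j=e^{iP}K\,\Phi_{j_0}$ satisfies the size and gradient bounds \eqref{e:size} with constants independent of $P$ and $j_0$, and it is $L^2$-bounded since $\widehat{e^{iP}K\Phi_{j_0}}=\widehat{e^{iP}K}*\widehat{\Phi_{j_0}}$ with $\widehat{e^{iP}K}\in L^\infty$ by Theorem~\ref{t:fixedPoly} and $\lVert\widehat{\Phi_{j_0}}\rVert_1=O(1)$. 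Thus $\sum_{j\le j_0}T_P^j$ is a Calder\'on--Zygmund operator in the sense of this paper, and Theorem~\ref{t:czo} dominates it by $\Lambda_1$, hence by $\Lambda_r$. The error from dropping $e^{iP}-1$ at these scales has kernel $E$ with $|E(y)|\lesssim\Lambda_{j(y)}|y|^{-n}$, whence $|E*f|\lesssim(\sum_{j\le j_0}\Lambda_j)\,Mf\lesssim Mf$, again sparse-dominated at exponent $1$.

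\emph{High frequencies ($j>j_0$).} Two facts drive this case. First, $T_P^j$ is local at scale $2^j$: it carries functions supported in a cube $Q$ of side $\sim 2^j$ into functions supported in a fixed dilate of $Q$. Second---the classical core of Theorem~\ref{t:fixedPoly}---a van der Corput estimate for polynomial phases, uniform over the lower-order coefficients, gives $\lVert T_P^j\rVert_{L^2\to L^2}=\lVert\widehat{e^{iP}K_j}\rVert_\infty\lesssim\Lambda_j^{-\delta}$ for some $\delta=\delta(n,d)>0$; interpolating against the trivial $L^1$ and $L^\infty$ bounds yields $\lVert T_P^j\rVert_{L^p\to L^p}\lesssim\Lambda_j^{-\delta_p}$, $\delta_p>0$, for $1<p<\infty$, and the same for $(T_P^j)^*$. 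Tiling $\mathbb R^n$ by a grid $\mathcal D_j$ of cubes of side $\sim 2^j$, locality gives $|\langle T_P^jf,g\rangle|\le\sum_{Q\in\mathcal D_j}|\langle T_P^j(f\mathbf 1_{3Q}),g\mathbf 1_Q\rangle|$. Writing $f\mathbf 1_{3Q}$ and $g\mathbf 1_Q$ each as a mean plus a mean-zero part, the three terms involving a mean are $\lesssim\Lambda_j^{-\delta_r}|Q|\langle f\rangle_{Q,r}\langle g\rangle_{Q,r}$ (bound the factor carrying $T_P^j$ or $(T_P^j)^*$ as an $L^1$ pairing over a cube of side $\sim 2^j$ and use the $L^r$ bounds), while the mean-zero--mean-zero term is handled by balancing the oscillatory gain $\Lambda_j^{-\delta}$ against the cost $1+\Lambda_j$ of the kernel's smoothness, both symmetric in $f$ and $g$. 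Each resulting estimate, summed over $Q\in\mathcal D_j$, is $\Lambda_j^{-c}$ ($c>0$) times a sparse form over the grid $\mathcal D_j$; the sum over $j>j_0$ converges, and a standard device (bounding each grid-sum by an integral of products of $r$-maximal functions, or running the entire construction inside a single stopping-time tree) collapses the countable family into one sparse form. Together with the low-frequency part this proves Theorem~\ref{t:main1}, and Corollary~\ref{c:wtd} follows from the weighted theory of sparse forms on letting $r\downarrow 1$.

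\emph{The main obstacle.} The only non-elementary input is the uniform single-scale estimate $\lVert T_P^j\rVert_{L^2\to L^2}\lesssim\Lambda_j^{-\delta}$, which is precisely the stationary-phase / $TT^*$ analysis underlying Theorem~\ref{t:fixedPoly} and may be quoted from there. The delicate bookkeeping is to arrange the high-frequency exponents so that the final estimate is at the symmetric pair $(r,r)$ for all of $1<r<2$; pushing the mean-zero--mean-zero term below exponent $2$ is exactly where the hypothesis $r<2$ is consumed, and if the crude interpolation above only covers a subinterval one recovers the whole range by further subdividing each single-scale piece at the scale $2^j/\Lambda_j$ on which its phase is essentially frozen.
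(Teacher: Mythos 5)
There is a genuine gap here, and it is worth being precise about what it is: the statement you were asked to prove is an open \emph{conjecture} in this paper, and your argument does not reach it --- it reaches Theorem~\ref{t:main1}, which you in fact say explicitly at the end of your second paragraph. The conjectured bound is by the \emph{asymmetric} form $\Lambda_{1,r}(f,g)=\sum_Q\langle f\rangle_{Q,1}\langle g\rangle_{Q,r}\lvert Q\rvert$, with an $L^1$ average on $f$; every estimate you produce at the high frequencies carries $\langle f\rangle_{Q,r}$ with $r>1$ on the $f$ side, i.e.\ the symmetric form $\Lambda_{r,r}$, which is strictly weaker. Your high-frequency mechanism is the same as the paper's proof of Theorem~\ref{t:main1}: a single-scale $L^2\to L^2$ gain (the Stein--Wainger $TT^*$ estimate, Lemma~\ref{l:K} in the paper) interpolated against the trivial $L^1\to L^\infty$ bound to get $\lVert I_Qf\rVert_{r'}\lesssim 2^{-\eta k}\lvert Q\rvert^{-1+2/r'}\lVert f\mathbf 1_Q\rVert_r$. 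On the Riesz--Thorin segment joining $(L^1\to L^\infty)$ and $(L^2\to L^2)$, the only point with $p=1$ is the trivial endpoint itself, where the oscillatory gain degenerates to $2^{-0}=1$ and the sum over scales $j>j_0$ diverges. So no amount of rebalancing, subdividing at scale $2^j/\Lambda_j$, or reshuffling mean/mean-zero terms within this framework will place an exponent $1$ on $f$ while retaining a summable gain: the $L^1$ average is categorically out of reach of interpolation against trivial bounds.

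What the conjecture actually requires is a genuinely $L^1$-based single-scale input --- something like a weak-type $(1,1)$ estimate with decay in the polynomial coefficients, obtained from a Calder\'on--Zygmund decomposition adapted to the oscillation. This is exactly the Chanillo--Christ machinery \cite{MR883667} that the paper points to immediately after stating the conjecture, suggesting it would likely yield the case $r=2$ (note also that $\Lambda_{1,r}$ \emph{increases} in $r$, so the real content of the conjecture is $r$ near $1$, which is harder still). As a proof of Theorem~\ref{t:main1}, your sketch is essentially the paper's argument (annular decomposition, uniform $L^2$ gain from Ricci--Stein/Stein--Wainger, interpolation, reduction of the unit scales to Theorem~\ref{t:czo}), modulo some vagueness in the ``three terms involving a mean'' step and in how the per-scale grids are collapsed into a single sparse collection. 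As a proof of Conjecture~\ref{j:oscillatory}, it is missing the one idea that makes the conjecture a conjecture.
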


It seems likely that the weak type argument of Chanillo and Christ \cite{MR883667} would establish the Conjecture for $ r=2$. Also see \cite{160901564}.  

\bigskip 

We turn to weighted inequalities for 
\emph{discrete random Hilbert transforms} acting on functions on $ \ell ^2 (\mathbb Z )$. 
 Define a sequence of Bernoulli rvs 
$ \{ X _{n} \;:\; n\neq 0\}$ with $ \mathbb P (X_n =1) = \lvert  n\rvert ^{- \alpha } $, where $ 0 \leq \alpha < 1$. 
Then, the set $ \{n \;:\; X_n=1\}$ is a.s.~infinite, by the Borel-Cantelli Lemma. Then, we consider the random Hilbert transform, and  maximal function below.   
\begin{align}  \label{e:Ha}
H _{\alpha } f (x)& = \sum_{n\neq 0}  \frac {X_n} {n ^{1- \alpha }} f (x-n). 
\\ \label{e:Ma}
M _{\alpha }  f (x)& = \sup_{n > 0}   \Bigl\lvert \frac 1 { S_N} \sum_{n=1} ^{N} X_n f (x-n) \Bigr\rvert, \qquad S_N = \sum_{n=1} ^{N} X_n . 
\end{align}

Our sparse bound here is more restrictive, with the value of  the sparse index $ r$ depending upon random parameter $ \alpha $. 
\begin{theorem}\label{t:R} 
For any  $ 0< \alpha < 1$, $ 1+ \alpha < r < 2$, 
almost surely, the following holds: 
For all   functions  $ f, g$ finitely supported  on $ \mathbb Z $, there is 
a bilinear  sparse operator $ \Lambda _{r} $ so that 
\begin{equation*}
  \lvert  \langle H _{\alpha } f, g \rangle\rvert  \lesssim \Lambda _{r}  (f,g).  
\end{equation*}
The same inequality holds for $ M _{\alpha }$.  (The sparse operator can be taken non-random, but the implied constant is random.)  
\end{theorem}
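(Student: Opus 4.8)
The plan is to prove the sparse bound for $H_\alpha$ by a stopping-time/recursion scheme analogous to the standard proof of Theorem~\ref{t:czo}, but with the random set $\{n : X_n = 1\}$ handled through moment estimates that force the larger sparse index $r > 1+\alpha$. Fix a dyadic interval $Q_0 \subset \mathbb{Z}$ containing the supports of $f,g$. The goal is the recursive inequality
\begin{equation*}
|\langle H_\alpha f, g\rangle| \lesssim \langle f\rangle_{Q_0,r}\,\langle g\rangle_{Q_0,r}\,|Q_0| + \sum_{Q' } |\langle H_\alpha (f\mathbf{1}_{Q'}), g\mathbf{1}_{Q'}\rangle|,
\end{equation*}
where the $Q'$ range over a disjoint subcollection of dyadic children at a bounded number of scales below $Q_0$, with $\sum |Q'| \le (1-c)|Q_0|$; iterating this gives the sparse form. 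So the heart of the matter is to show that the ``error'' obtained after subtracting the local pieces and the principal term is controlled by $\langle f\rangle_{Q_0,r}\langle g\rangle_{Q_0,r}|Q_0|$.

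First I would decompose $H_\alpha$ dyadically in the kernel, $H_\alpha = \sum_{j\ge 0} H_\alpha^{(j)}$, where $H_\alpha^{(j)}$ restricts to $|n| \sim 2^j$; each $H_\alpha^{(j)} f(x) = 2^{-j(1-\alpha)}\sum_{|n|\sim 2^j} X_n f(x-n)$ up to harmless weights. For the truncated operator $\sum_{j \le \log_2 |Q_0|} H_\alpha^{(j)}$ one runs the usual Calderón–Zygmund stopping argument: select maximal dyadic subcubes $Q'$ where $\langle f\rangle_{Q',r}$ or $\langle g\rangle_{Q',r}$ exceeds a large constant times the corresponding average over $Q_0$, so that outside $\bigcup Q'$ the truncated sums obey the right $L^r$--$L^{r'}$ pointwise/averaged bounds by a Schur-test type estimate on the matrix with entries $2^{-j(1-\alpha)}X_n$. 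The point is that the operator norm of $H_\alpha^{(j)}$ from $L^r(\text{scale }2^j)$ to itself is, by interpolation between a trivial $L^1\to L^\infty$ bound and an $L^2$ bound, summable in $j$ \emph{provided} we only need the crude deterministic bound $\#\{n\sim 2^j : X_n=1\} \lesssim 2^j$; this is where no randomness beyond Borel–Cantelli is needed, and where the restriction $r<2$ enters.

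The main obstacle — and the place the hypothesis $r > 1+\alpha$ is forced — is the tail $\sum_{j > \log_2|Q_0|} H_\alpha^{(j)}$, the scales larger than $Q_0$, which has no local counterpart to absorb it and must be bounded directly by $\langle f\rangle_{Q_0,r}\langle g\rangle_{Q_0,r}|Q_0|$. Here one cannot use the deterministic count; instead I would fix (non-randomly) the dyadic grid and estimate, for each dyadic $Q$, the random quantity $\sup_{j > \log_2|Q|} 2^{-j(1-\alpha)} \#\{n : |n|\sim 2^j,\ n \in Q - Q\}$ type sums via Bernstein/Chernoff moment bounds: since $\mathbb{E}[X_n] = |n|^{-\alpha}$, the expected number of ``active'' $n$ at scale $2^j$ within distance $\sim 2^j$ is $\sim 2^{j(1-\alpha)}$, so after normalization the tail contribution to $\langle H_\alpha f, g\rangle$ is morally $\sum_{j} 2^{-j(1-\alpha)} \cdot 2^{j(1-\alpha)} \cdot (\text{average of } f)(\text{average of }g)|Q|$ — but the fluctuations are only summable, uniformly over the countably many dyadic $Q$, once we pay an extra $|Q|^{\varepsilon}$ Hölder loss, which Hölder's inequality converts from the $L^1$ average to the $L^r$ average precisely when $r$ exceeds the critical exponent $1+\alpha$. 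I would make this rigorous by a union bound over dyadic cubes $Q$ with $|Q| = 2^k$, using exponential moment estimates for the sums $\sum_{n} X_n \mathbf{1}_{A}(n)$ to get probability bounds decaying fast enough in $k$ to be Borel–Cantelli summable, thereby fixing a single full-probability event on which all the needed inequalities hold simultaneously.

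Finally, for the maximal function $M_\alpha$ the argument is parallel: one linearizes the supremum by choosing a measurable $N = N(x)$, notes $S_N \sim N^{1-\alpha}$ almost surely with uniform constants (again Chernoff plus Borel–Cantelli), and then the averaging operator $f \mapsto S_N^{-1}\sum_{n=1}^N X_n f(x-n)$ is handled by the same dyadic decomposition and stopping argument, with the maximal truncation absorbed into the $\langle f\rangle_{Q,r}$ term since the averages at scales larger than $|Q|$ again contribute a geometric series controlled by the $r$-average once $r > 1+\alpha$. I expect the random tail estimate — organizing the Chernoff bounds so they are summable over \emph{all} dyadic intervals simultaneously, and tracking how the resulting power of $|Q|$ dictates the threshold $r>1+\alpha$ — to be the only genuinely delicate step; everything else is a routine adaptation of the continuous sparse-domination machinery behind Theorems~\ref{t:czo} and~\ref{t:main1}.
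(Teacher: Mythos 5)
There is a genuine gap: your proposal never exploits any cancellation, and without it neither the summability over scales nor the threshold $r>1+\alpha$ can be obtained. The paper's proof begins by writing $H_\alpha = H + \sum_k T_k$, where $H=\mathbb E H_\alpha$ is the deterministic discrete Hilbert transform (handled by the known sparse bound of Theorem~\ref{t:czo}) and $T_k$ is the single-scale operator built from the \emph{centered} variables $Y_n = X_n - n^{-\alpha}$. Your single-scale pieces $H_\alpha^{(j)}$ are not centered, so they contain the deterministic Hilbert transform at scale $2^j$, whose $\ell^2\to\ell^2$ norm is $\simeq 1$ uniformly in $j$; no Schur test on the positive matrix with entries $2^{-j(1-\alpha)}X_n$ (row/column sums $\simeq 1$) and no interpolation with an $L^1\to L^\infty$ bound can make these summable in $j$. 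The decay in $j$ comes only from square-root cancellation in the random part: by Plancherel, $\lVert T_k\rVert_{\ell^2\to\ell^2}$ equals the sup of the random trigonometric polynomial $\sum_{|n|\sim 2^k} Y_n e^{2\pi i n\theta}/n^{1-\alpha}$, which is $\lesssim \sqrt k\, 2^{-k(1-\alpha)/2}$ with probability $1-O(2^{-k})$ (Bernstein's theorem for trigonometric polynomials to discretize in $\theta$, then a subgaussian tail bound and a union bound, then Borel--Cantelli). Your probabilistic toolkit --- Chernoff bounds for the counts $\sum_n X_n\mathbf 1_A(n)$ --- only controls positive sums and cannot produce this estimate.

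Relatedly, the step you identify as the crux is vacuous, and the step you call routine is where the content lies. With $f,g$ supported in (a bounded dilate of) $Q_0$, the pairing $\langle H_\alpha^{(j)}f,g\rangle$ vanishes once $2^{j}\gg |Q_0|$, so there is no ``tail of scales larger than $Q_0$'' to estimate. And the exponent $1+\alpha$ does not arise from a H\"older upgrade of $L^1$ averages to $L^r$ averages (H\"older gives $\langle f\rangle_1\le\langle f\rangle_r$ for every $r\ge 1$ and selects no threshold); it arises from interpolating the two bounds of Lemma~\ref{l:almost}, namely the $\ell^2$ gain $2^{-k(1-\alpha)/2}$ against the trivial $\ell^1$ bound, which carries a \emph{loss} of $2^{k\alpha}$. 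Solving $(1-\theta_0)\alpha=\theta_0(1-\alpha)/2$ gives exactly $r_0=1+\alpha$ as the exponent at which the interpolated estimate breaks even, whence the gain $2^{-\eta k}$ for $r>1+\alpha$ and, since each fixed scale of the dyadic grid is a partition, the sparse bound without any stopping-time recursion. Your outline as written would need both the centering and the random Fourier-series estimate inserted before any of its remaining steps could be carried out.
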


Weighted inequalities are a corollary.  They are \emph{the  first we know of} holding for operators defined on sets of the integers with zero asymptotic density.

\begin{corollary}\label{c:R} For any  $ 0< \alpha < 1$, 
almost surely, the following holds:  For all $ 1 + \alpha < p <  \frac {1+ \alpha } \alpha $, and  weights $ w$ so that 
\begin{equation} \label{e:WW}
w ^{1+ \alpha } \in A _{ (1+ \alpha ) (p-1)+1 }, \qquad w   \in A _{1 + \frac 1 {(1+\alpha) (p'-1)}} , 
\end{equation}
we have 
$
\lVert  H _{\alpha } \;:\; \ell ^{p} (w) \mapsto \ell ^{p} (w)\rVert  < \infty  
$. 
The implied constant   only depends upon $ [ w ^{1+ \alpha }] _{A _{ (1+ \alpha ) (p-1)+1 }}$, 
and $ [w] _{A_{1 + \frac 1 {\alpha (p'-1)}} }$.  
The same inequality holds for $ M _{\alpha }$.  
\end{corollary}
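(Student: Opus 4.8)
\section*{Proof proposal for Corollary~\ref{c:R}}

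The plan is to deduce Corollary~\ref{c:R} from Theorem~\ref{t:R} by the standard passage from a sparse bilinear bound to weighted norm inequalities, the only real content being to match the conditions \eqref{e:WW} to what that passage produces. Fix $ 0<\alpha <1 $, an exponent $ 1+\alpha < p < \tfrac{1+\alpha}\alpha $, a weight $ w $ obeying \eqref{e:WW}, and put $ \sigma = w ^{1-p'} $ for the dual weight. By duality (the dual of $ \ell ^p (w) $ is $ \ell ^{p'}(\sigma) $ under the standard pairing) together with a routine density argument --- finitely supported sequences being dense in $ \ell ^p (w) $ --- it suffices to bound $ \lvert\langle H _\alpha f,g\rangle\rvert $ by $ \lVert f\rVert _{\ell ^p (w)}\lVert g\rVert _{\ell ^{p'}(\sigma)} $ for finitely supported $ f,g $. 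By Theorem~\ref{t:R} this reduces, almost surely and with a random constant that is uniform over $ f,g $, to showing that the sparse form $ \Lambda _r $ of \eqref{e:sparseOp} (with intervals in $ \mathbb Z $ in place of cubes) satisfies $ \Lambda _r(f,g)\lesssim\lVert f\rVert _{\ell ^p (w)}\lVert g\rVert _{\ell ^{p'}(\sigma)} $ for some admissible sparse index $ r\in(1+\alpha ,2) $. Note the restriction $ 1+\alpha<p<\tfrac{1+\alpha}\alpha $ is exactly the assertion that some $ r $ slightly larger than $ 1+\alpha $ satisfies both $ r<p $ and $ r<p' $, the range in which the embedding below operates.

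The next step is to unwind the Muckenhoupt classes in \eqref{e:WW}. Writing out the $ A_q $ conditions and using $ \sigma = w ^{1-p'} $, the identity $ v^\delta\in A _{\delta(q-1)+1}\Leftrightarrow v\in A_q\cap RH_\delta $ shows that $ w ^{1+\alpha}\in A _{(1+\alpha)(p-1)+1} $ is equivalent to $ w\in A_p\cap RH _{1+\alpha} $, and that $ w\in A _{1+\frac1{(1+\alpha)(p'-1)}} $ is equivalent to $ \sigma\in A _{p'}\cap RH _{1+\alpha} $. Thus \eqref{e:WW} says precisely that $ w\in A_p $ and that both $ w $ and $ \sigma $ satisfy a reverse Hölder inequality of order $ 1+\alpha $; on intervals this is exactly the pair of \emph{separated} Hölder-bump conditions $ \sup_Q\langle w\rangle _{Q,1+\alpha}\langle\sigma\rangle_Q^{\,p-1}<\infty $ and $ \sup_Q\langle w\rangle_Q\langle\sigma\rangle _{Q,1+\alpha}^{\,p-1}<\infty $, carrying the bump on $ w $ in the first and on $ \sigma $ in the second. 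Since $ A_q $ classes --- hence reverse Hölder classes --- are open, these conditions self-improve: there is $ r $ with $ 1+\alpha<r<2 $, $ r<p $, and $ r<p' $, for which the same two conditions hold with bump exponent $ r $ in place of $ 1+\alpha $, with comparable constants. Fix this $ r $ and invoke Theorem~\ref{t:R} with it.

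For this $ r $, the estimate $ \Lambda _r(f,g)\lesssim\lVert f\rVert _{\ell ^p (w)}\lVert g\rVert _{\ell ^{p'}(\sigma)} $ is the standard consequence of separated power bumps for sparse forms: on each interval $ 3Q $ one uses Hölder's inequality to pass from the Lebesgue average defining $ \langle f\rangle _{Q,r} $ (resp.\ $ \langle g\rangle _{Q,r} $) to a weighted average of $ f $ (resp.\ $ g $), picking up the bumped factors that the two conditions above control, and then sums over the sparse --- hence Carleson --- collection $ \mathcal S $ by means of the weighted Carleson embedding theorem, the Carleson constants being supplied by the bumps. Tracking constants through this computation gives the stated dependence on $ [w ^{1+\alpha}] _{A _{(1+\alpha)(p-1)+1}} $ and $ [w] _{A _{1+\frac1{\alpha(p'-1)}}} $. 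The case of the maximal operator $ M _\alpha $ in \eqref{e:Ma} is identical, since Theorem~\ref{t:R} furnishes the same sparse bound for it. The main obstacle is the exponent bookkeeping in the middle step: recognizing \eqref{e:WW} as the \emph{sharp} separated-bump condition at the threshold exponent $ 1+\alpha $ --- rather than the cruder pair $ w\in A _{p/r} $, $ \sigma\in A _{p'/r} $ that the naive bound $ \Lambda_r(f,g)\lesssim\langle M_rf,M_rg\rangle $ would require --- and using the self-improvement of the Muckenhoupt conditions to absorb the strict inequality $ r>1+\alpha $ that Theorem~\ref{t:R} forces.
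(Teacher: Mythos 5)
Your unwinding of \eqref{e:WW} via the Johnson--Neugebauer identity into ``$w\in A_p\cap RH_{1+\alpha}$ and $\sigma\in A_{p'}\cap RH_{1+\alpha}$,'' and the self-improvement to some exponent $r>1+\alpha$, are correct and coincide with the paper's computation. The gap is in your final step: the claim that the \emph{full} sparse form $\Lambda_r$ supplied by Theorem~\ref{t:R} satisfies $\Lambda_r(f,g)\lesssim\lVert f\rVert_{\ell^p(w)}\lVert g\rVert_{\ell^{p'}(\sigma)}$ under these hypotheses. The separated-bump machinery you invoke belongs to the form $\Lambda_{1,1}$, where the extra integrability sits on the \emph{weights}; in $\Lambda_{r,r}$ the $r$-th power sits on the \emph{functions}, and the H\"older step that converts $\langle f\rangle_{Q,r}$ into a $w$-weighted average of $f$ produces factors of the type $\langle w^{-r/(p-r)}\rangle_Q$, i.e.\ an $A_{p/r}$-type condition --- precisely the condition you dismiss as ``cruder'' --- not the bumped $A_p$ quantities you wrote down. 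Indeed the estimate you need is \emph{false} under \eqref{e:WW}: take $p=2$ and $w(n)=(1+\lvert n\rvert)^a$ with $\tfrac{1-\alpha}{1+\alpha}<a<\tfrac{1}{1+\alpha}$, so that \eqref{e:WW} holds; testing $\Lambda_r$ on $F(n)=(1+n)^{-c}\mathbf 1_{[0,N]}$ (with $c>\max\{1/r,(1+a)/2\}$) and $G=\mathbf 1_{[0,N]}$ over the sparse family $\{[0,2^j]\,:\,0\le j\le\log_2 N\}$ gives $\Lambda_r(F,G)\gtrsim N^{1/r'}$, while $\lVert F\rVert_{\ell^2(w)}\lVert G\rVert_{\ell^2(\sigma)}\approx N^{(1-a)/2}$, and $1/r'>\tfrac{\alpha}{1+\alpha}>(1-a)/2$ for every $r>1+\alpha$. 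So no choice of admissible $r$ rescues the step.

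The moral is that Corollary~\ref{c:R} is not a formal consequence of Theorem~\ref{t:R}: passing to the sparse form discards the exponential decay in the scale, and that decay is exactly what compensates for the weakness of the hypothesis \eqref{e:WW}. The paper's proof accordingly does not quote Theorem~\ref{t:R} as a black box. It splits $H_\alpha=H+\sum_k T_k$, handles the deterministic $H$ by the sharp $A_p$ bound (only $w\in A_p$ is needed there), and for each $T_k$ uses the scale-localized estimate \eqref{e:calmost}: the factor $2^{-\eta k}$ there allows one to sum, over $k$, the \emph{single-scale} forms $\sum_{\lvert Q\rvert=2^{k}}\langle f\rangle_{Q,r}\langle g\rangle_{Q,r}\lvert Q\rvert$ treated in Proposition~\ref{p:scale}, whose cubes at a fixed scale are disjoint. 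To repair your argument you must retain this scale-by-scale structure with its geometric gain rather than argue from the sparse bound alone.
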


The study of these questions was initiated by Bourgain \cite{MR937581}, as an elementary example of a sequence of integers for which one could derive $ \ell ^{p}$ inequalities, with the sequence of integers also having asymptotic density zero.  Various aspects of these questions have been studied, both in $ \ell ^{p}$, in the weak $ (1,1)$ endpoints 
\cites {MR1325697,MR2680392,MR3421994,MR2318621,MR2576702}.  We are not aware of any result in the literature that proves a weighted estimate in this sort of discrete setting.  (If the set of integers has full density, it is easy to transfer weighted estimates.) 
  
There is a subtle difference between the  Hilbert transform and the maximal function in this random setting. In particular, more should be true for the maximal function. Prompted by the work of LaVictoire \cite{MR2576702}, we pose 

\begin{conjecture}  For $ 0< \alpha < 1/2$, almost surely,  for all $ 1< r < 2$, and finitely supported functions $ f, g$, 
there is a sparse operator $ \Lambda _{1,r}$ so that 
\begin{equation*}
\langle M _{\alpha } f , g \rangle \lesssim \Lambda _{1,r} (f,g).  
\end{equation*}  
\end{conjecture}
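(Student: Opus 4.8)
We may take $ f,g\ge 0 $. The plan is to split $ M _{\alpha } $ into a deterministic average, which is harmless, and a mean-zero random fluctuation, which carries all the difficulty. Write $ A_N f = \mu _N\ast f $ with $ \mu _N = S_N ^{-1}\sum _{n=1} ^{N}X_n\delta _n $, let $ \bar S_N = \sum _{m=1} ^{N}m ^{-\alpha } $, and set $ \bar\mu _N(n) = n ^{-\alpha }/\bar S_N $ for $ 1\le n\le N $. Then $ \bar\mu _N $ is a probability density that is \emph{decreasing} in $ n $, so a layer-cake argument gives the pointwise bound $ \sup _N(\bar\mu _N\ast f)\lesssim M _{\mathrm{HL}}f $, where $ M _{\mathrm{HL}} $ is the Hardy--Littlewood maximal function; hence $ \langle \sup _N(\bar\mu _N\ast f),g\rangle\lesssim \Lambda _{1,1}(f,g)\lesssim \Lambda _{1,r}(f,g) $ by the discrete sparse bound for $ M _{\mathrm{HL}} $. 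Concentration for $ S_N $, which holds a.s.\ for all large $ N $ since $ \alpha <1 $, lets one write $ \mu _N = \bar\mu _N + \nu _N $ with $ \nu _N\approx \bar S_N ^{-1}\sum _{n\le N}(X_n - n ^{-\alpha })\delta _n $, a random signed measure of mass zero and $ \ell ^{1} $-norm $ \lesssim 1 $. It remains to produce a $ \Lambda _{1,r} $ bound, a.s., for $ M ^{\flat }f := \sup _N\lvert \nu _N\ast f\rvert $.

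For $ M ^{\flat } $ I would use two facts. First, the \emph{weak-type endpoint}: the maximal inequality $ \lVert M _{\alpha }f\rVert _{\ell ^{1,\infty }}\lesssim \lVert f\rVert _{\ell ^{1}} $, hence the same for $ M ^{\flat } $, valid a.s.\ \emph{precisely} for $ \alpha <\tfrac 12 $, as established by LaVictoire \cite{MR2576702}; this is the source of the hypothesis on $ \alpha $, the mechanism being that the count $ \bar S_N\approx N ^{1-\alpha } $ of selected integers up to $ N $ must exceed $ N ^{1/2} $ for the relevant square function estimates to close. Second, an \emph{$ \ell ^{2} $ bound with scale decay}: by a Salem--Zygmund/Bernstein estimate for the random trigonometric polynomial $ \sum _{n\le N}(X_n - n ^{-\alpha })e ^{in\theta } $, a union bound over an $ N ^{-2} $-net, and Borel--Cantelli along $ N = 2 ^{j} $, one gets a.s.\ $ \lVert \widehat{\nu _{2 ^{j}}}\rVert _\infty\lesssim \bigl(j\,2 ^{-j(1-\alpha )}\bigr) ^{1/2} $, which is square-summable in $ j $; together with a short-variation estimate over $ N\in (2 ^{j},2 ^{j+1}) $ this gives both $ \lVert M ^{\flat }\rVert _{\ell ^{2}\to \ell ^{2}}\lesssim 1 $ a.s.\ and, at each scale, $ \lVert \nu _N\ast f\rVert _{\ell ^{2}}\lesssim (\log N) ^{1/2}N ^{-(1-\alpha )/2}\lVert f\rVert _{\ell ^{2}} $.

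With these in hand the plan is to run the stopping-time/recursion scheme for sparse domination of maximal operators --- the machinery underlying Theorem~\ref{t:czo} and Theorem~\ref{t:R}, in the form that produces a sparse bound for an averaging maximal function out of a weak-type estimate and single-scale data. Fixing a dyadic cube $ Q $, one linearizes $ \sup _N $, assigns each point the scale $ N(x) $, and attributes the pairing at scale $ N\approx\ell (Q) $ to $ Q $; the $ \ell ^{2} $ estimate with its scale decay sums the ``off-diagonal'' scales $ N\gg\ell (Q) $, the weak-type bound absorbs the exceptional set of the stopping-time construction, and what is then needed to close is a single-scale improving bound $ \lVert \nu _N\ast (f\mathbf 1 _{CQ})\rVert _{\ell ^{r'}(Q)}\lesssim \lvert Q\rvert ^{-1/r}\lVert f\mathbf 1 _{CQ}\rVert _{\ell ^{1}} $ for $ N\approx\ell (Q) $.

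This single-scale improving bound is the main obstacle. The naive estimate gives, a.s., $ \lVert \nu _N\rVert _{\ell ^{r'}}\approx N ^{-(1-\alpha )/r} $, which exceeds the required scaling $ N ^{-1/r} $ by the factor $ N ^{\alpha /r} $: the ``spikes'' of the random average are simply too large for the naive improving estimate to hold at the correct scale. Overcoming this requires exploiting the \emph{cancellation} of $ \nu _N $ (smallness of $ \widehat{\nu _N} $) together with the weak-$ (1,1) $ bound --- effectively interpolating the weak endpoint against the scale-decaying $ \ell ^{2} $ estimate inside the recursion rather than relying on a true single-scale improving estimate --- and doing so uniformly down to $ r $ near $ 1 $ while also controlling the variation over the continuum of scales $ N $. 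This is the delicate point, and is presumably why the statement is still only a conjecture; once settled, it would upgrade Corollary~\ref{c:R} to the broader class of weights natural to a $ (1,r) $ sparse bound.
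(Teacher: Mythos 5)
The statement you are addressing is posed in the paper as a \emph{conjecture}; the paper contains no proof of it, and your proposal does not supply one either. The preliminary reductions you make are sound and consistent with the paper's framework: splitting $M_\alpha$ into the deterministic averages $\bar\mu_N\ast f$ (dominated by the Hardy--Littlewood maximal function, hence by $\Lambda_{1,1}$) plus a mean-zero random part, invoking LaVictoire's weak $(1,1)$ result as the source of the hypothesis $\alpha<1/2$, and deriving the almost-sure $\ell^2$ single-scale decay $\lVert\widehat{\nu_{2^j}}\rVert_\infty\lesssim (j\,2^{-j(1-\alpha)})^{1/2}$ by exactly the Bernstein-polynomial/union-bound/Borel--Cantelli argument the paper uses in Lemma~\ref{l:almost}. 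But the argument then stops at the step you yourself flag: the single-scale improving estimate $\lVert\nu_N\ast(f\mathbf 1_{CQ})\rVert_{\ell^{r'}(Q)}\lesssim |Q|^{-1/r}\lVert f\mathbf 1_{CQ}\rVert_{\ell^1}$ that the stopping-time recursion would require. Your own computation shows this fails: testing on a delta function gives $\lVert\nu_N\rVert_{\ell^{r'}}\approx N^{-(1-\alpha)/r}$, off from the needed $N^{-1/r}$ by the factor $N^{\alpha/r}$, and no amount of cancellation in $\widehat{\nu_N}$ rescues a bound that is already false for the extremal input. Interpolating the $\ell^1$ and $\ell^2$ single-scale estimates, which is what the paper actually does in Corollary~\ref{c:almost}, only closes for $r>1+\alpha$ and yields the symmetric form $\Lambda_{r,r}$ of Theorem~\ref{t:R}; reaching $\Lambda_{1,r}$ for all $r>1$ requires genuinely new input beyond single-scale estimates, which is precisely why the statement remains a conjecture.

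To be clear about what a correct submission here would look like: either a new mechanism that exploits the weak $(1,1)$ endpoint \emph{inside} the sparse recursion (in place of a single-scale improving bound), with all the variational issues over $N$ handled, or a counterexample. Your write-up is a reasonable research plan and correctly locates the difficulty, but as it stands it establishes nothing beyond what Theorem~\ref{t:R} already gives, and it should not be presented in the form of a proof environment.
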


\bigskip 
We turn to the   context for our paper.  The concept of sparse operators arose from Lerner's remarkable median inequality \cite{MR2721744}. It's application to weighted inequalities was advanced by several authors, with a  high point of this development being Lerner's argument \cite {MR3085756} showing that the weighted norm of Calder\'on-Zygmund operators is comparable to that of the norms of sparse operators.  This lead to the question of pointwise control, namely Theorem~\ref{t:czo}.  
First established by Conde-Alonso and Rey \cite{14094351}, also see Lerner and Nazarov \cite{150805639}, 
the author \cite{150105818}  established  Theorem~\ref{t:czo} with a stopping time argument. 
The latter argument  was extended by Bernicot, Frey and Petermichl  \cite{MR3531367} to a setting where the operators are generated by semigroups, including examples outside the scope of classical Calder\'on-Zygmund theory.   For closely related developments see \cites{MR3484688,151005789}.  The sparse bounds for commutators \cites{160401334,160405506} are remarkably powerful.  Edging beyond the Calder\'on-Zygmund context, Benau, Bernicot and Frey \cite{160506401} have supplied sparse bounds for certain Bochner-Riesz multipliers.  

Very recently, Culiuc, di Plinio and Ou \cite{160305317}  have established a sparse domination result in a setting far removed from the extensions above: The trilinear form associated to the  bilinear Hilbert transform is dominated by a sparse form.
This is a surprising result, as the bilinear Hilbert transform has all the difficult features of the Hilbert transform, with  additional oscillatory  and arithmetic-like aspects.  
This paper is  an initial effort on our part to understand how general a technique `domination by sparse' could be. 
There are plenty of additional directions that one could think about.  

For instance, the interest in the oscillatory singular integrals is driven in part by their application to singular integrals defined on nilpotent groups.   Implications of the  sparse bound  in this setting are unexplored.  

%

\bigskip 
There are two approaches to sparse bounds, the bilinear form method \cites{160305317}, and the use of the maximal truncation inequality \cite{150105818}.   We use neither approach. After applying the known sparse bounds for singular integrals,   for the remaining parts of the operator, there is a very simple interpolation argument which you can use in the bilinear setting. The notable point about the proofs are that they are quite easy, and yet deliver striking applications.

\section{Proof of Theorem~\ref{t:main1}} 

Our conclusion is invariant under dilations of the operator. Hence, we can proceed under the assumption that 
$ \lVert P\rVert = \sum_{\alpha } \lvert  \lambda _{\alpha }\rvert  =1$.   We can also assume that the polynomial $ P$ has no linear term, as it can be absorbed into the function $ f$.   Under these assumptions we prove 

\begin{theorem}\label{t:s} Let $ P$ be a polynomial  without linear terms, and $ \lVert P\rVert=1$. 
Then, for bounded compactly supported functions $ f, g$ and $ 1< r < \infty $, 
there is a sparse form $ \Lambda_1 $ and a $ \eta >0$ so that 
\begin{equation}\label{e:s}
\lvert  \langle T_P f, g \rangle\rvert \lesssim \Lambda _1 (f,g) 
+ \sum_{ Q  \in \mathcal D\;:\; \lvert  Q\rvert \geq 1 }  
\langle  f  \rangle _{Q,r} \langle g \rangle _{Q,r} \lvert  Q\rvert  ^{1- \eta }
\end{equation}
\end{theorem}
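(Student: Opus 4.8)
The plan is to split $T_P$ into a local and a non-local piece, handle the non-local piece by the known sparse bound for Calder\'on–Zygmund operators (Theorem~\ref{t:czo}), and handle the local piece by exploiting the oscillation of $e^{iP}$ on scales where $P$ is not essentially constant. Concretely, fix a smooth radial bump $\psi$ supported in $\{|y|\le 1\}$ with $\psi\equiv 1$ near the origin, and write $K = K\psi + K(1-\psi) =: K_0 + K_\infty$. The operator $T_{P,\infty}$ with kernel $e^{iP(y)}K_\infty(y)$ still has a Calder\'on–Zygmund kernel (the oscillatory factor is smooth and its derivatives are controlled because $\|P\|=1$ fixes the degree, and on $|y|\ge 1$ the extra factors of $|\nabla P|$ are absorbed by the decay of $K_\infty$ together with the cutoff at scale $1$ — more carefully one decomposes $K_\infty$ into annular pieces $|y|\sim 2^j$, $j\ge 0$, with $e^{iP}$ contributing a factor $2^{j(d-1)}$ to derivative bounds, summably damped after freezing the large-scale behavior), so Theorem~\ref{t:czo} applies and gives the $\Lambda_1(f,g)$ term. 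It is really the interaction of $T_{P,0}$ with large cubes that produces the second sum in \eqref{e:s}.

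Next I would localize the bilinear form $\langle T_{P,0}f,g\rangle$ to a sparse-type decomposition over dyadic cubes $Q$. For cubes $Q$ with $|Q|\lesssim 1$ the kernel $K_0$ is a genuine Calder\'on–Zygmund kernel on that scale (the oscillation is harmless on a unit-scale cube since $P$ has bounded variation there), so those contributions are again dominated by a sparse form $\Lambda_1(f,g)$, using Theorem~\ref{t:czo} applied to a frozen-phase CZ operator on that scale. This leaves the cubes $Q\in\mathcal D$ with $|Q|\ge 1$, and on each such $Q$ I want to show
\begin{equation*}
|\langle T_{P,0}(f\mathbf 1_{3Q}), g\mathbf 1_{3Q}\rangle| \lesssim \langle f\rangle_{Q,r}\langle g\rangle_{Q,r}|Q|^{1-\eta}
\end{equation*}
for some $\eta=\eta(d,n)>0$, which is exactly the second term in \eqref{e:s}. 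The mechanism is interpolation between two crude bounds: (i) the trivial $L^1\to L^\infty$-type estimate using $|K_0(y)|\lesssim |y|^{-n}\mathbf 1_{|y|\le1}$, which gives $|\langle T_{P,0} f_{3Q},g_{3Q}\rangle|\lesssim \langle f\rangle_{Q,1}\langle g\rangle_{Q,1}|Q|$ — i.e. the $r=1$ endpoint with no gain; and (ii) an $L^2$ bound with a genuine power gain $|Q|^{-\delta}$ coming from the oscillatory decay of $T_{P,0}$ restricted to large cubes. Interpolating (Hölder/Riesz–Thorin between the $(1,1)$ and $(2,2)$ endpoints) upgrades this to an $(r,r)$ bound with a smaller but still positive power gain $|Q|^{-\eta}$ for every $r>1$, which is all we need.

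The key input in step (ii) is a quantitative oscillatory estimate: on a cube $Q$ of side $\ell\ge 1$, the operator with kernel $e^{iP(y)}K_0(y)$, $|y|\le 1$, "sees" that the phase $P(y)$ — having no linear term and $\|P\|=1$ — is genuinely curved, so that by van der Corput / stationary phase (or directly by the $TT^*$ method and the Chanillo–Christ type argument referenced as Theorem~\ref{t:fixedPoly}) one gets an $L^2(3Q)\to L^2(3Q)$ operator norm that decays like $\ell^{-\delta}=|Q|^{-\delta/n}$ for some $\delta=\delta(d)>0$. Here one must be a little careful: the decay cannot come from the $|y|\le 1$ kernel alone, so the right picture is to exploit that $f\mathbf 1_{3Q}, g\mathbf 1_{3Q}$ live on a cube much larger than the kernel support, decompose $3Q$ into unit subcubes, and note that the "diagonal" $L^2$ mass is $\sim|Q|$ while a uniform bound on each unit block plus almost-orthogonality (Cotlar–Stein, with the off-diagonal decay furnished by the oscillation of $P$ across translated unit cubes) beats the trivial $\ell^n$-fold triangle inequality by a power of $\ell$. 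I expect this oscillatory $L^2$ estimate with an explicit polynomial gain in $|Q|$ to be the main obstacle — it is morally Theorem~\ref{t:fixedPoly} with an extra quantitative decay in the scale — while the splitting of $K$, the reduction to large cubes, and the final interpolation are routine.
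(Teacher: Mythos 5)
Your overall architecture --- split the kernel at scale $1$, feed one piece to Theorem~\ref{t:czo}, and treat the other by interpolating a trivial $L^1$-type bound against an $L^2$ bound with a power gain $|Q|^{-\delta}$ --- is exactly the paper's architecture. But you have assigned the two pieces backwards, and with that assignment neither of your two key claims is true.

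First, the large-scale part: $e^{iP(y)}K_\infty(y)$ is \emph{not} a Calder\'on--Zygmund kernel with bounds uniform in the coefficients of $P$. Its gradient picks up a factor $|\nabla P(y)|\lesssim |y|^{d-1}$, so on the annulus $|y|\sim 2^j$ the H\"ormander/gradient condition fails by a factor $2^{jd}$; there is no ``freezing of the large-scale behavior'' that repairs this, and Theorem~\ref{t:czo} does not apply. This is precisely the regime where Ricci--Stein and Chanillo--Christ must work hard, and it is where the paper extracts the gain: decomposing $K$ into annuli $\varphi_k$ supported where $|y|\sim 2^k$, $k\ge 1$, the $TT^*$ kernel estimate (Lemma~\ref{l:K}, quoted from Stein--Wainger) gives $\lVert I_Q\rVert_{2\to 2}\lesssim |Q|^{-\epsilon/2n}$ for the single-scale operator localized to a cube with $\ell Q=2^{k+2}$, because the top-degree terms of $P$ (with $\lVert P\rVert=1$) oscillate rapidly when $|y|\sim 2^k$. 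Interpolating with the trivial $\lVert I_Q f\rVert_\infty\lesssim |Q|^{-1}\lVert f\mathbf 1_Q\rVert_1$ yields the $(r,r)$ bound with gain $2^{-\eta k}$, which is the second sum in \eqref{e:s}.

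Second, the small-scale part: your claimed $L^2(3Q)\to L^2(3Q)$ norm decay $|Q|^{-\delta}$ for $T_{P,0}$ on large cubes is false. Since $P$ has no linear term and $\lVert P\rVert=1$, on $|y|\le 1$ the phase satisfies $|P(y)|\lesssim |y|^2$, so $e^{iP(y)}K_0(y)-K_0(y)$ is an absolutely integrable kernel and $T_{P,0}$ differs from a truncated Calder\'on--Zygmund operator by something dominated by the maximal function. In particular $T_{P,0}$ is translation invariant with $\ell^2$ multiplier of size $\simeq 1$; localizing $f,g$ to a large cube buys nothing (test on a modulated bump supported in $Q$ at a frequency where the multiplier is bounded below), and Cotlar--Stein over unit blocks only returns the trivial bound. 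The correct treatment of this piece is the paper's first step: $|T_{P,0}f - T_0 f|\lesssim Mf$, and both $T_0$ and $M$ are dominated by $\Lambda_1$. So the small-scale part produces the sparse term $\Lambda_1(f,g)$, the large-scale part produces the sum over $|Q|\ge 1$ with the gain $|Q|^{-\eta}$, and your proposal has these exactly interchanged. The missing ingredient you would need to repair the argument is the oscillatory $TT^*$ estimate for the \emph{large} annuli (the paper's Lemma~\ref{l:K}), which has no analogue for the unit-scale piece.
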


It is easy to see that this implies Theorem~\ref{t:main1}, since the second term on the right is restricted to dyadic cubes of volume at least one, and there is a gain of $ \lvert  Q\rvert ^{- \eta } $.  Moreover, we will see that this Theorem implies the  weighted result.

Let $ e (\lambda ) = e ^{i \lambda } $ for $ \lambda \in \mathbb R $.  
If the kernel  $ K$ of $ T$ is supported on $ 2B= \{y \;:\; \lvert  y\rvert \leq 2\}$, then we have 
\begin{equation*}
\lvert  e (P (y)) K (y) - K (y)\rvert \lesssim  \mathbf 1_{2B} (y) \lvert  y \rvert ^{-n+1},  
\end{equation*}
so that $ \lvert  T _{P} f - T f \rvert \lesssim M f $.  Both $ T$ and $ M$ admit pointwise domination by sparse forms, hence also by bilinear forms.  (This is the main result of  \cite{150105818}.)

Thus, we can proceed under the assumption that the kernel $ K $ is not supported on $ B$. We can then write 
\begin{equation*}
K  =  \sum_{j =1} ^{\infty  } \varphi _j  
\end{equation*}
where   $ \varphi _j$ is supported on $ 2 ^{j-1} B \setminus 2 ^{j-2}B$, with 
$ \lVert  \nabla ^{s} \varphi _ j\rVert _{\infty } \lesssim 2 ^{-nj -sj}$, for $ s=0,1$.

We use shifted dyadic grids, $ \mathcal D _{t}$, for $ 1\leq t \leq 3 ^{n}$. These grids have the property that 
\begin{equation*}
\{ \tfrac 13  Q  \;:\; Q\in \mathcal D_t,\  \ell Q= 2 ^{k}, 1\leq t \leq 3 ^{n}\}
\end{equation*}
form a partition of $ \mathbb R ^{n}$.   Throughout, $ \ell Q = \lvert  Q\rvert ^{1/n} $ is the side length of the cube $ Q$. 
We fix a dyadic grid $ \mathcal D_t$ throughout the remainder of the argument, 
and set $ \mathcal D_+ = \{Q \;:\; \ell Q > 2 ^{10}\}$.  Define  
\begin{equation*}
I _{Q} f = \int e (P  (y)) \varphi _{ k} (y)  (\mathbf 1_{\tfrac 13Q} f ) (x-y)\; dy, \qquad \ell Q = 2 ^{k+2}.  
\end{equation*}
Note  that $ I _Q f $ is supported on $ Q$, and that we have suppressed the dependence on $ P $, which we will continue below.

The basic estimate is then this Lemma. 

\begin{lemma}\label{l:fixed}  For each  cube $ Q$ with $ \lvert  Q\rvert \geq 1 $ and  $ 1< r < 2$, there holds 
\begin{equation}\label{e:fixed}
 \lvert  \langle I _{Q} f, g \rangle\rvert 
\lesssim 2 ^{- \eta k}  \langle f \rangle _{Q, r} \langle g \rangle _{Q,r} \lvert  Q\rvert , 
\end{equation}
where $ \eta = \eta (d,n,r) >0 $.  
\end{lemma}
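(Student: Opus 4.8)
The plan is to exploit the oscillation in $e(P(y))$ via van der Corput–type estimates, combined with the smoothness of $\varphi_k$, to extract a decay factor in $k$; then use a trivial $L^\infty$-type bound; and finally interpolate between the two. First I would set up the two extreme estimates. For the trivial bound, since $\varphi_k$ is supported on an annulus of radius $\sim 2^k$ with $\|\varphi_k\|_\infty \lesssim 2^{-nk}$, one has $\lvert I_Q f(x)\rvert \lesssim \langle f\rangle_{\tfrac13 Q,1}$ pointwise (the kernel has total mass $O(1)$), so that $\lvert\langle I_Q f,g\rangle\rvert \lesssim \langle f\rangle_{Q,1}\langle g\rangle_{Q,1}\lvert Q\rvert$, with no gain in $k$. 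This is the $L^1\times L^1$ endpoint.

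Next, the key step: an $L^2\times L^2$ bound \emph{with} a power gain $2^{-\delta k}$. Here I would estimate the operator norm of $f \mapsto I_Q f$ on $L^2$ by a $TT^*$/Fourier argument. Writing $I_Q$ as convolution (restricted to $\tfrac13 Q$) against $e(P(y))\varphi_k(y)$, its $L^2\to L^2$ norm is bounded by $\sup_\xi \lvert \widehat{e(P(\cdot))\varphi_k}(\xi)\rvert$. Since $P$ has no linear term and $\|P\|=1$, on the annulus $\lvert y\rvert\sim 2^k$ (with $2^k \gtrsim 2^{10}$, i.e. $\lvert Q\rvert\geq 1$ up to the fixed dilation) the phase $P(y)-y\cdot\xi$ has a second derivative of size $\gtrsim 2^{(j_0-2)k}$ for the top-degree part, for some scale — more carefully, one splits $P$ into homogeneous pieces and on the dilate $2^k B$ the largest piece dominates and has a nondegenerate Hessian direction of size $\gtrsim c\, 2^{m k}$ for some $m\geq 1$ (using $\|P\|=1$). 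A stationary-phase / van der Corput estimate in the relevant variable, together with integration by parts using $\|\nabla\varphi_k\|_\infty\lesssim 2^{-nk-k}$ to control the non-oscillatory error, yields $\lvert\widehat{e(P)\varphi_k}(\xi)\rvert \lesssim 2^{-\delta k}$ uniformly in $\xi$, for some $\delta=\delta(d,n)>0$. Hence $\lvert\langle I_Q f,g\rangle\rvert \lesssim 2^{-\delta k}\|\mathbf 1_{\tfrac13 Q}f\|_2\|\mathbf 1_Q g\|_2 \lesssim 2^{-\delta k}\langle f\rangle_{Q,2}\langle g\rangle_{Q,2}\lvert Q\rvert$.

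Finally, interpolate. Given $1<r<2$, write $\tfrac1r = \tfrac{1-\theta}{1} + \tfrac{\theta}{2}$ with $\theta\in(0,1)$; interpolating the $L^1\times L^1$ bound (no gain) against the $L^2\times L^2$ bound (gain $2^{-\delta k}$) gives $\lvert\langle I_Q f,g\rangle\rvert \lesssim 2^{-\theta\delta k}\langle f\rangle_{Q,r}\langle g\rangle_{Q,r}\lvert Q\rvert$, which is \eqref{e:fixed} with $\eta = \theta\delta = \eta(d,n,r)>0$. One subtlety is that bilinear interpolation of the \emph{local} averaged estimates is cleanest done at the level of the sublinear operators $f\mapsto \mathbf 1_Q I_Q f$ between $L^1$ and $L^2$ and then pairing with $g$, or directly via Hölder after interpolating the single operator; either route is routine.

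The main obstacle I anticipate is the $L^2$ decay estimate: making the van der Corput bound \emph{uniform in $\xi$} and extracting an honest power $2^{-\delta k}$ requires care because $P$ is a general polynomial (several monomials of possibly competing sizes after dilation to scale $2^k$), so one must argue that on the annulus one monomial genuinely dominates and contributes a nondegenerate phase, while the $O(1)$-mass of $\varphi_k$ and its derivative bounds absorb all lower-order and boundary contributions. This is exactly the kind of estimate underlying Theorem~\ref{t:fixedPoly} (Ricci–Stein), so the needed input is classical; the only work is localizing it to a single annular piece $\varphi_k$ and reading off the explicit decay rate.
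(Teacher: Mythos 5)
Your proof is correct and follows the same architecture as the paper's: a trivial $L^1$ endpoint with no gain in $k$, an $L^2$ endpoint with a power gain $2^{-\delta k}$ coming from the oscillation, and interpolation to reach $L^r$. (The paper interpolates the operator bounds $L^2\to L^2$ and $L^1\to L^\infty$ via Riesz--Thorin and then applies H\"older in $g$, but it explicitly remarks that bilinear interpolation between the $(1,1)$ and $(2,2)$ forms, as you do, works just as well.) The one substantive difference is the source of the $L^2$ gain. The paper obtains it from a kernel estimate for $I_Q^{\ast}I_Q$ (Lemma~\ref{l:K}, quoted from Stein--Wainger), which confines the kernel to a small exceptional set $Z_Q$ up to an $O(\lvert Q\rvert^{-1-\epsilon})$ error; you instead bound the multiplier $\sup_{\xi}\lvert\widehat{e(P(\cdot))\varphi_k}(\xi)\rvert$ directly by van der Corput, using that after rescaling to the unit annulus the nonlinear coefficients of $P$ have total size at least $2^{2k}$ (since $\lVert P\rVert=1$ and $P$ has no linear term), while the added linear term $y\cdot\xi$ does not affect derivatives of order two and higher, so the decay is uniform in $\xi$. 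Both inputs are classical and of comparable depth; your route is marginally more direct for a convolution operator, whereas the paper's $TT^{\ast}$ formulation is the one that survives for phases $P(x,y)$ depending on both variables, which the introduction flags as a suppressed extension. Your identification of where the real work lies --- making the van der Corput decay quantitative and uniform --- is accurate, and it is precisely the content the paper outsources to the cited lemma rather than reproving.
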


Theorem~\ref{t:s} follows immediately from this Lemma.  
The oscillatory nature of the problem exhibits itself in the next Lemma. Write 
\begin{equation}  \label{e:K}
I ^{\ast} _Q I_Q  \phi (x) = \mathbf 1_{\tfrac 13Q}  (x)  \cdot \int _{\tfrac 13 Q}  K _Q (x,y) \phi  (y) \; dy . 
\end{equation}

\begin{lemma}\label{l:K} For each cube $ Q \in \mathcal D_+$, and $ x\in \tfrac 13 Q$, we have 
\begin{equation}\label{e:K<}
\lvert  K_Q (x,y)\rvert 
\lesssim    \lvert  Q\rvert ^{-1}  \mathbf 1_{Z_ {Q}} (x - y) +  \lvert  Q\rvert ^{-1 - \epsilon } \mathbf 1_{ Q} (x) \mathbf 1_{Q} (y),  
\end{equation}
where $ Z_ {Q} \subset   Q   $ has measure at most $  (\ell Q) ^{- \epsilon } \lvert Q\rvert   $, where  $ \epsilon = \epsilon (n,d)>0$. 
\end{lemma}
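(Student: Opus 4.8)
The plan is to compute $K_Q(x,y)$ explicitly from the definition of $I_Q$ and then split the estimate into a ``diagonal'' piece, where the phase $P$ is essentially stationary and one exploits the Calder\'on--Zygmund size of $\varphi_k$, and an ``oscillatory'' piece, where non-vanishing derivatives of $P$ force decay via van der Corput / stationary phase. Writing out $I_Q^\ast I_Q\phi(x) = \int\!\int e(P(u)-P(v))\,\overline{\varphi_k(u)}\,\varphi_k(v)\,\mathbf 1_{\frac13 Q}(x)\,\phi(x-v+u)\,du\,dv$ and changing variables to isolate $y = x - v + u$, one gets
\begin{equation*}
K_Q(x,y) = \int e\bigl(P(u) - P(u + x - y)\bigr)\,\overline{\varphi_k(u)}\,\varphi_k(u+x-y)\,du .
\end{equation*}
Because both copies of $\varphi_k$ are supported in the annulus $2^{k-1}B\setminus 2^{k-2}B$ and $\ell Q = 2^{k+2}$, the integrand vanishes unless $|x-y|\lesssim 2^k \sim \ell Q$, which already gives the crude bound $|K_Q(x,y)|\lesssim \|\varphi_k\|_\infty^2 \cdot 2^{nk}\lesssim 2^{-nk}\sim |Q|^{-1}$ supported on $x-y$ lying in a fixed dilate of $Q$. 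The content of the lemma is to upgrade this on the bulk of the support.

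The phase is $\Phi(u) := P(u) - P(u+x-y)$. First I would record that, since $P$ has no linear term and $\|P\|=1$, the gradient $\nabla\Phi(u)$ is a polynomial in $u$ and in $h := x-y$ whose coefficients are the $\lambda_{\alpha,\beta}$; on the relevant scale $|u|\sim 2^k$ and $|h|\lesssim 2^k$ one has $|\nabla\Phi(u)| \lesssim 2^{(d-1)k}$ but, more importantly, the derivative is genuinely non-degenerate away from a small exceptional set. The standard tool here is the sublevel-set estimate: for a polynomial $\Phi$ of degree $d$ in one variable (restrict to a line through $u$) one has $|\{u : |\Phi'(u)|\le\lambda\}|\lesssim_d \lambda^{1/(d-1)}\,\|\Phi'\|^{-1/(d-1)}$ — combined with a rescaling argument this produces the set $Z_Q$. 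Concretely: let $Z_Q$ be the set of $h = x-y$ for which $\Phi$ is ``nearly stationary'' on a nontrivial portion of the $u$-annulus; a polynomial lower bound on a suitable mixed derivative (using that at least one nonlinear coefficient of $P$ is $\gtrsim 1$ after normalization, by homogenizing) shows $Z_Q$ can be taken with $|Z_Q|\le (\ell Q)^{-\epsilon}|Q|$. On the complement, integration by parts / van der Corput in $u$ against the phase $\Phi$, paying the $C^1$ bound $\|\nabla^s\varphi_k\|_\infty\lesssim 2^{-nk-sk}$ for the amplitude, yields a gain $|K_Q(x,y)|\lesssim 2^{-nk} (\ell Q)^{-\epsilon'} = |Q|^{-1-\epsilon}$ for $\epsilon = \epsilon'/n$.

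The main obstacle, and where care is needed, is making the sublevel-set / van der Corput dichotomy uniform in the polynomial $P$ — i.e. getting $\epsilon$ to depend only on $(n,d)$ and not on $\lambda$. The trick is the usual normalization/compactness argument: after dividing through by $\|\nabla^m P\|$ for the appropriate order $m$, the surviving coefficients lie on a compact set bounded away from zero, and the sublevel estimates for polynomials of bounded degree are uniform over such coefficient sets (this is exactly the mechanism behind Theorem~\ref{t:fixedPoly}). A secondary technical point is that $\Phi(u) = P(u)-P(u+h)$ could in principle degenerate for special $h$ (e.g. $h=0$ gives $\Phi\equiv 0$), which is precisely why the exceptional set $Z_Q$ is unavoidable and why the second term in \eqref{e:K<} carries only the crude $|Q|^{-1}$ bound on $Z_Q$ rather than a gain; quantifying ``how large can the bad $h$-set be'' is the one genuinely non-routine estimate, handled by viewing $h\mapsto(\text{leading behavior of }\Phi)$ as a nonzero polynomial map and applying the sublevel bound in $h$.
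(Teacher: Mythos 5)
The paper does not actually prove Lemma~\ref{l:K}: it declares it ``well known'' and cites Stein--Wainger \cite{MR1879821}*{Lemma 4.1}. Your sketch is, in outline, a faithful reconstruction of the standard argument behind that citation, and the structure is right: the $T^*T$ kernel is the oscillatory integral $\int e\bigl(P(u+h)-P(u)\bigr)\overline{\varphi_k(u)}\varphi_k(u+h)\,du$ with $h=x-y$; the support of $\varphi_k$ forces $|h|\lesssim \ell Q$ and gives the trivial bound $|K_Q|\lesssim |Q|^{-1}$; and the dichotomy is governed by the size of the non-constant (in $u$) coefficients of the differenced phase, which are polynomials in $h$ vanishing at $h=0$ --- whence the unavoidable exceptional set $Z_Q$, controlled by a sublevel-set estimate in $h$, and a van der Corput gain $(\ell Q)^{-\epsilon'}$ off it. Two remarks. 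First, the two quantitative inputs --- (i) the uniform bound $\bigl|\int_{|u|\sim 1} e(R(u))a(u)\,du\bigr|\lesssim_d \|R\|^{-\epsilon}$ for polynomials $R$ of degree $\le d$ with the implied constant independent of the coefficients, and (ii) the measure bound $|\{h : \|\Phi_h\|\le \lambda\}|\lesssim_d (\lambda/\|\cdot\|)^{c}$ --- are asserted rather than proved in your sketch; they are genuinely standard (Ricci--Stein, Carbery--Christ--Wright), but they are where all the content of the lemma resides, so a self-contained write-up would need to state them precisely and track the rescaling $u\mapsto 2^k u$ that makes the relevant coefficient norm grow like a positive power of $\ell Q$ off $Z_Q$; this is also exactly where the hypotheses ``no linear term'' and $\|P\|=1$ enter (a linear term contributes nothing to $\Phi_h$'s dependence on $u$, and the normalization guarantees some nonlinear coefficient is $\gtrsim_{n,d}1$). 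Second, your ``compactness/normalization'' remark about uniformity in $P$ is the right instinct but should be replaced by the explicit uniform estimates just mentioned, since compactness alone does not obviously yield a uniform $\epsilon$. With those two lemmas quoted, your argument closes and matches the cited source; nothing in it is wrong.
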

 
This Lemma is well known, see for instance \cite{MR1879821}*{Lemma 4.1}.   Here is how we use the Lemma.  
Using   Cauchy-Schwartz, we have 
\begin{align}\label{e:2}
\lVert I_Q f \rVert_2 ^2 &\lesssim    \lvert  Q\rvert ^{-1} \int _{Q} \int _{Z_Q} \lvert  f (x)\rvert  \lvert  f (x-y)\rvert  \; dy dx 
+  \lvert  Q\rvert ^{ - \epsilon } \langle f \rangle _{Q,1} ^{2} \lvert  Q\rvert 
\\&
\lesssim 
\lvert  Q\rvert ^{- \epsilon /n}    \lVert f \mathbf 1_{Q}\rVert   _{2 } ^2 . 
\end{align}
We also  have the trivial but rarely used   $ \lVert I _{Q} f \rVert_ \infty  \lesssim   \lvert  Q\rvert ^{-1}  \lVert f \mathbf 1_{Q}\rVert _1     $.   
By Riesz Thorin interpolation, there holds with $ \ell Q = 2 ^{k}$, 
\begin{equation*}
\lVert I _{Q} f \rVert _{r'} \lesssim 2 ^{- \eta  k}  \lvert  Q\rvert ^{-1+ 2/ r'}    \lVert f \mathbf 1_{Q}\rVert _r, \qquad 1 < r  \leq 2,\  r'= \tfrac r {r-1}.   
\end{equation*}
Above, $ \eta = \eta (\epsilon , r)$
But, this immediately implies  \eqref{e:fixed}.  Namely, 
\begin{align*}
\lvert  \langle I_Q f, g \rangle \rvert & \lesssim  
\lVert I _{Q} f \rVert _{r'}  \lVert g \mathbf 1_{Q}\rVert _{r} 
 \\&\lesssim 2 ^{- \eta  k}  \lvert  Q\rvert ^{-1+ 2/ r'}    \lVert f \mathbf 1_{Q}\rVert _r  \lVert g \mathbf 1_{Q}\rVert _{r}  
\\&=  2 ^{- \eta  k} \langle f \rangle _{Q,r} \langle g \rangle _{Q,r} \lvert  Q\rvert.  
\end{align*}
(Alternatively, one can just use bilinear interpolation.) 

We now give the weighted result. 

\begin{proof}[Proof of Corollary~\ref{c:wtd}] 
The qualitative result that $ T_P $ is bounded on $ L ^{p} (w)$ for $ w \in A_p$, $ 1< p < \infty $ is as follows. 
Given $ w \in A_p$, recall that the dual weight is $ \sigma = w ^{1- p'}$.  Then, it is equivalent to show that 
\begin{equation*}
\lvert  \langle T _{P}  (f \sigma ),   g w    \rangle\rvert \lesssim 
C _{[w] _{A_p}}\lVert f\rVert _{L ^{p} (\sigma) } \lVert g\rVert _{L ^{p'} (w)}. 
\end{equation*}
Using the sparse domination from \eqref{e:s}, we see that we need to prove the corresponding bound for the terms on the right in  \eqref{e:s}.  Now, it is well known \cite{MR3085756} that 
\begin{equation*}
\Lambda _{1} (   f, g ) \lesssim 
[w] ^{\max \{ 1, \frac {1} {p-1} \}} _{A_p} \lVert f\rVert _{L ^{p} (w ) } \lVert g\rVert _{L ^{p'} (w)}. 
\end{equation*}
Indeed, this is a key part of the proof of the $ A_2$ Theorem by sparse operators.  

So, it remains to consider the second term on the right in \eqref{e:s}. For each $ k \in \mathbb N $, we have by 
Proposition~\ref{p:scale}, $ k\in \mathbb Z $, 
\begin{equation}
\sum_{ Q  \in \mathcal D\;:\; \lvert  Q\rvert  =  2 ^{nk} }  
\langle  f     \rangle _{Q,r} \langle g \rangle _{Q,r} \lvert  Q\rvert 
\lesssim 
 [w] _{A_p} ^{1/p} [w] _{RH_r} [ \sigma ] _{RH_r}
 \lVert f\rVert _{L ^{p} (w ) } \lVert g\rVert _{L ^{p'} (w)} . 
\end{equation}
As we recall in \S~\ref{s:wtd}, there is a $ r = r ([w] _{A_p}) >1$ so that 
$  [w] _{RH_r}  [ \sigma ] _{RH_r} < 4 $.  And so the proof of the Corollary is complete.

Indeed, it is easy enough to make this step quantitative. For $ 2 < p < \infty $, the choice of $ r$ can be taken to satisfy $ r-1 > c [w] _{A_p} ^{-1} $, which then means that the choice of $ \eta = \eta (r)$ in \eqref{e:s} is at least as big is 
$ c [w] _{A_p} ^{-1} $.  Then, our bound is 
\begin{equation}  \label{e:sharp?}
\langle T_P ( \sigma  f), g w \rangle _{} \lesssim [w] ^{1+\frac 1p }  _{A_p}\lVert f\rVert _{L ^{p} (\sigma )} \lVert g\rVert _{L ^{p'} (w)}, \qquad 2 < p < \infty . 
\end{equation}
We have no reason to believe that this estimate is sharp.  
\end{proof}

\section{Random Hilbert Transforms} 

The discrete Hilbert transform 
\begin{equation*}
H f (x) = \sum_{n\neq 0} \frac {f (x-n)} {n} 
\end{equation*}
satisfies a sparse bound: For all finitely supported functions $ f$ and $ g$, there is a sparse operator $ \Lambda $ so that 
\begin{equation} \label{e:dHS}
\lvert \langle H f, g \rangle\rvert \lesssim \Lambda _{1,1} (f,g).  
\end{equation}
This is a consequence of the main results of  Theorem~\ref{t:czo}.  
Recalling the definition of $ H _{\alpha } $ in \eqref{e:Ha}, we see that $ \mathbb E H_ \alpha  f  = Hf$, so it remains to consider the difference 
\begin{align*}
 H_ \alpha  f (x) - H f (x) &:= \sum_{k=1} ^{\infty } 
 \sum_{ n \;:\; 2 ^{k-1} \leq \lvert  n\rvert < 2 ^{k} }  \frac { X_n - n ^{- \alpha }} {n ^{1- \alpha } } f (x-n) 
\\&
:= \sum_{k=1} ^{\infty }  T_k f (x). 
\end{align*}
Above, we have passed directly to the distinct scales of the operator.  
We will subsequently write $ Y_n =  X_n - n ^{- \alpha }$, which are independent mean zero random variables.  

The crux of the matter are these two estimates: 

\begin{lemma}\label{l:almost} Almost surely, for all $ 0< \epsilon < 1$, and 
for all integers $ k$, and  $ f, g$ supported on an interval $ I$ of length $ 2 ^k$,  we have 
\begin{equation}\label{e:almost}
\lvert  \langle   T_k f, g \rangle\rvert 
\lesssim 
\begin{cases}
2 ^{- k \frac {1- \alpha  } 2 + \epsilon  }  \langle f  \rangle _{I,2} \langle g \rangle _{I,2} \lvert  I\rvert 
\\
2 ^{k \alpha } \langle f  \rangle _{I,1} \langle g \rangle _{I,1} \lvert  I\rvert 
\end{cases}. 
\end{equation}
The implied constant is random, but independent of $ k \in \mathbb N $ and the choice of functions $ f,g $.   
\end{lemma}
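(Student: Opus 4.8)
The plan is to prove the two estimates separately. The second (the $ L^1 \times L^1$ bound with loss $ 2^{k\alpha}$) is the easy one: it is essentially deterministic. Since $ 0 \le X_n \le 1$ and $ n^{-\alpha} \le 1$, we have $ \lvert Y_n\rvert \le 1$, so the kernel of $ T_k$ is pointwise bounded by $ 2^{-k(1-\alpha)} \mathbf 1_{2^{k-1}\le \lvert n\rvert <2^k}$. Summing over the $ O(2^k)$ values of $ n$ in this range gives $ \lVert T_k f\rVert_\infty \lesssim 2^{-k(1-\alpha)} 2^k \langle f\rangle_{I,1} \lesssim 2^{k\alpha}\langle f\rangle_{I,1}$, and pairing with $ g$ supported on $ I$ (so $ \lvert g\rvert \le \langle g\rangle_{I,1}\lvert I\rvert$ in $ L^1$) yields the claimed bound. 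Strictly speaking one wants the averaging cube/interval to be $ 3I$ as in the definition of $ \langle\cdot\rangle_{Q,r}$, but since $ f, g$ are supported in $ I$ this only helps.

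The first estimate is the crux, and the main obstacle is extracting the square-root cancellation uniformly in $ k$, which is where the randomness enters. I would fix $ k$, fix an interval $ I$ of length $ 2^k$, and estimate the operator norm of $ T_k$ on $ \ell^2(I)$. Writing $ T_k = \sum_{2^{k-1}\le \lvert n\rvert<2^k} 2^{-(1-\alpha)k'} Y_n \tau_n$ where $ \tau_n$ is translation by $ n$ and $ k' \approx k$, the point is that $ \mathbb E\lVert T_k\rVert_{\ell^2(I)\to\ell^2(I)}$ should be $ \lesssim 2^{-k(1-\alpha)/2}$ times a polynomial factor in $ k$. One clean route: bound the operator norm by the Hilbert-Schmidt (Frobenius) norm of the matrix, i.e. $ \lVert T_k\rVert^2 \le \sum_{x,y\in I}\lvert (T_k)_{x,y}\rvert^2$. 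The $ (x,y)$ entry is $ 2^{-(1-\alpha)k'} Y_{x-y}$ when $ 2^{k-1}\le\lvert x-y\rvert<2^k$, so the Frobenius norm squared is $ \sum_{x\in I}\sum_{n} 2^{-2(1-\alpha)k'} Y_n^2 \lesssim 2^k \cdot 2^k \cdot 2^{-2(1-\alpha)k}\cdot \mathbb E[Y_n^2\text{-type sum}]$; since $ \mathbb E Y_n^2 \le n^{-\alpha}$, a Bernstein/Bennett concentration inequality for the independent nonnegative terms $ Y_n^2$ shows $ \sum_n Y_n^2 \lesssim 2^{k(1-\alpha)}$ up to logarithmic factors, almost surely and uniformly in $ k$ after a Borel–Cantelli argument over $ k$. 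This gives $ \lVert T_k\rVert \lesssim 2^{-k(1-\alpha)/2} \cdot 2^{\epsilon k}$ for every $ \epsilon>0$, almost surely; translation-invariance of the estimate in $ I$ (and a union bound over the finitely many dyadic positions at scale $ 2^k$, or directly over all of $ \mathbb Z$ using that the bound is translation-invariant) keeps the constant independent of $ I$.

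Finally I would convert the operator-norm bound into the stated bilinear form. By Cauchy–Schwarz, $ \lvert\langle T_k f,g\rangle\rvert \le \lVert T_k\rVert_{\ell^2\to\ell^2}\lVert f\mathbf 1_I\rVert_2\lVert g\mathbf 1_I\rVert_2$, and $ \lVert f\mathbf 1_I\rVert_2 = \langle f\rangle_{I,2}\lvert I\rvert^{1/2}$ (again up to the harmless $ 3I$ versus $ I$), so $ \lvert\langle T_k f,g\rangle\rvert \lesssim 2^{-k(1-\alpha)/2+\epsilon k}\langle f\rangle_{I,2}\langle g\rangle_{I,2}\lvert I\rvert$, which is the first case. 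The one genuinely delicate point to get right is the uniformity over all $ k\in\mathbb N$ simultaneously with probability one: this requires summing the tail probabilities from the concentration inequality over $ k$ and invoking Borel–Cantelli, and it is here that one uses $ \epsilon>0$ to absorb the polynomial-in-$ k$ losses coming both from the concentration bound and from the union over dyadic intervals at each scale. I expect the rest to be routine.
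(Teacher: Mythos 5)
Your treatment of the second estimate and the final Cauchy--Schwarz reduction to an operator-norm bound both match the paper, but the core of your argument --- bounding $\lVert T_k\rVert_{\ell^2\to\ell^2}$ by the Hilbert--Schmidt norm of the matrix --- does not work, and the numerology in your own sketch shows it. Carried out correctly, the Frobenius norm squared is $\sum_{y\in I}\sum_{n} n^{-2(1-\alpha)}Y_n^2 \approx 2^{k}\cdot 2^{-2k(1-\alpha)}\cdot 2^{k(1-\alpha)} = 2^{k\alpha}$ (using your concentration estimate $\sum_n Y_n^2\approx 2^{k(1-\alpha)}$), so what you actually obtain is $\lVert T_k\rVert\lesssim 2^{k\alpha/2}$, which exceeds $1$ and is therefore worse than even the trivial Schur-test bound $\sum_n \lvert Y_n\rvert\, n^{-(1-\alpha)} = O(1)$; it misses the target $2^{-k(1-\alpha)/2}$ by exactly the factor $\lvert I\rvert^{1/2}=2^{k/2}$. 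This loss is structural, not a matter of constants: the restriction of a convolution operator to an interval of length $2^k$ has on the order of $2^k$ singular values, and for a random convolution they are all of comparable size, so the Hilbert--Schmidt norm (the $\ell^2$ sum of the singular values) necessarily overshoots the operator norm (their maximum) by about $\sqrt{\lvert I\rvert}$. Your passage from this computation to ``$\lVert T_k\rVert \lesssim 2^{-k(1-\alpha)/2}\cdot 2^{\epsilon k}$'' appears to come from dropping the sum over the $2^k$ rows $y\in I$, i.e.\ from computing the $L^2(d\theta)$ norm of the Fourier symbol rather than the Frobenius norm; but the $L^2$ average of the symbol is a \emph{lower} bound for the operator norm, not an upper bound.

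The missing idea is to use that $T_k$ is a convolution operator: by Plancherel, $\lVert T_k\rVert_{\ell^2\to\ell^2}=\lVert Z\rVert_{L^\infty(d\theta)}$, where $Z(\theta)=\sum_n Y_n\, n^{-(1-\alpha)}e^{2\pi i n\theta}$ is a random trigonometric polynomial of degree $\lesssim 2^k$. Bernstein's inequality for trigonometric polynomials reduces the sup norm to the values at $O(2^k)$ equally spaced points; at each point the probabilistic Bernstein inequality (the summands are independent, mean zero, individually bounded by $\approx 2^{-k(1-\alpha)}$, with total variance $\approx 2^{-k(1-\alpha)}$) gives $\mathbb P\bigl(\lvert Z(\theta)\rvert > C\sqrt{k}\,2^{-k(1-\alpha)/2}\bigr)\lesssim 2^{-2k}$; the union bound over the sample points costs a factor $2^k$, and Borel--Cantelli in $k$ finishes. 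Your instincts about where the $\epsilon$ (or $\sqrt k$) loss and the Borel--Cantelli step enter are right, but the concentration must be applied to the sup of the symbol, not to $\sum_n Y_n^2$.
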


\begin{proof}
The second bound follows trivially from      
$   \lvert  Y_n\rvert /n ^{1- \alpha }  \mathbf 1_{   2 ^{k-1} \leq \lvert  n\rvert < 2 ^{k} }  \lesssim 2 ^{k (\alpha-1)} $.   
For the first bound, we clearly have 
\begin{equation*}
\lvert  \langle   T_k f, g \rangle\rvert 
\leq \lVert T_k \;:\; \ell ^2 \to \ell ^2 \rVert \cdot  \langle f  \rangle _{I,2} \langle g \rangle _{I,2} \lvert  I\rvert , 
\end{equation*}
so it suffices to estimate the operator norm above.  The assertion is that with high probability, the operator norm is small: 
\begin{equation}\label{e:small}
\mathbb P  \bigl( \lVert T_k \;:\; \ell ^2 \to \ell ^2 \rVert  > C \sqrt k 2 ^{- k \frac {1- \alpha  } 2  }  \bigr) \lesssim  2 ^{-k}, 
\end{equation}
provided $ C$ is sufficiently large.   Combine this with the Borel Cantelli Lemma to prove the Lemma as stated.  

\smallskip 
By Plancherel's Theorem, the operator norm is equal to  $ \lVert Z (\theta )\rVert _{L ^{\infty } ( d\theta )}$, where 
\begin{equation*}
Z (\theta ) :=  
\sum_{ n \;:\; 2 ^{k} \leq \lvert  n\rvert < 2 ^{k+1} }  Y_n \frac {   e ^{ 2 \pi i \theta }} {n ^{1- \alpha } }. 
\end{equation*}
The expression above is a random Fourier series, with frequencies at most $ 2 ^{k+2}$.  By Bernstein's Theorem for trigonometric polynomials, the $ L ^{\infty } (d \theta )$ norm can be estimated by testing the norm on at most $ 2 ^{k+3}$ equally spaced points in $ \mathbb T $, that is, we have 
\begin{align*}
\mathbb P  \bigl( \lVert Z (\theta )\rVert_ \infty >  C \sqrt k 2 ^{- k \frac {1- \alpha  } 2  } \bigr)
\lesssim 2 ^{k} 
\sup _{\theta } 
\mathbb P  \bigl( \lvert Z (\theta )\rvert  >  C \sqrt k 2 ^{- k \frac {1- \alpha  } 2  } \bigr), 
\end{align*}
 where we have simply used the union bound.  
 
Now, $ Z (\theta )$ is the sum of independent, mean zero  random variables, 
which are bounded by one, and have standard deviation bounded by  $ c 2 ^{ - k  \frac {1- \alpha  } 2}$. 
So by, for instance, the Bernstein inequality, it follows that 
 \begin{equation*}
\mathbb P  ( \lvert Z (\theta )\rvert  >  C \sqrt k 2 ^{- k \frac {1- \alpha  } 2  } ) \lesssim 2 ^{-2k}, 
\end{equation*}
for appropriate $ C$. This completes the proof. 
\end{proof}

From the previous Lemma, we have the Corollary below. It with the sparse bound for the Hilbert transform 
\eqref{e:dHS} completes the proof of Theorem~\ref{t:R}, for the random Hilbert transform. The case for maximal averages is entirely similar.  

\begin{corollary}\label{c:almost} Almost surely, for $  {1+ \alpha } < r < 2$, there is a $ \eta >0$ so that  
for all integers $ k$, and all   functions $ f, g$ supported on an interval $ I$ of length $ 2 ^{k}$, we have 
\begin{equation}\label{e:calmost}
\lvert  \langle   T_k f, g \rangle\rvert \lesssim 
2 ^{-  \eta k   }  \langle f  \rangle _{I,r} \langle g \rangle _{I,r} \lvert  I\rvert . 
\end{equation}

\end{corollary}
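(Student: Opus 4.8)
The plan is to deduce \eqref{e:calmost} from the two estimates of Lemma~\ref{l:almost} by a single interpolation, once those estimates are recast in the right language. Fix $ k $ and an interval $ I $ with $ \lvert I\rvert = 2 ^{k} $, and let $ \mu $ be counting measure on $ 3I $ normalized to a probability measure, so that $ \langle h \rangle _{I,p} = \lVert h\rVert _{L ^{p} (\mu )} $ for every $ p $. Since $ f $ supported on $ I $ forces $ T_k f $ to be supported on a fixed dilate of $ I $, which we may take to be $ 3I $, the pairing $ \langle T_k f, g \rangle $ only sees $ f,g $ on $ 3I $, and we may regard $ B (f,g) := \lvert I\rvert ^{-1} \langle T_k f , g \rangle $ as a bilinear form on $ L ^{p} (\mu ) \times L ^{p} (\mu ) $. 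In this language Lemma~\ref{l:almost} states exactly that, almost surely and for every $ 0 < \epsilon < 1 $,
\begin{equation*}
\lVert B \;:\; L ^2 (\mu ) \times L ^2 (\mu ) \to \mathbb C\rVert \lesssim 2 ^{- k (\frac {1- \alpha } 2 - \epsilon )},
\qquad
\lVert B \;:\; L ^1 (\mu ) \times L ^1 (\mu ) \to \mathbb C\rVert \lesssim 2 ^{k \alpha },
\end{equation*}
with implied constants independent of $ k $ and $ I $ (the $ \epsilon $ records the harmless $ \sqrt k $ loss from Bernstein's inequality in the proof of Lemma~\ref{l:almost}).

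Given $ 1 + \alpha < r < 2 $, choose $ \theta $ by $ \tfrac 1r = (1- \theta ) + \tfrac \theta 2 $, that is $ \theta = 2 - \tfrac 2r \in (0,1) $. Applying bilinear complex interpolation to $ B $ between these two endpoints (equivalently, Riesz--Thorin for the linear map $ T_k $ between $ L ^2 (\mu ) \to L ^2 (\mu ) $ and $ L ^1 (\mu ) \to L ^\infty (\mu ) $, followed by H\"older) gives
\begin{equation*}
\lVert B \;:\; L ^{r} (\mu ) \times L ^{r} (\mu ) \to \mathbb C\rVert
\lesssim \bigl( 2 ^{k \alpha }\bigr) ^{1- \theta } \bigl( 2 ^{- k (\frac {1- \alpha } 2 - \epsilon )}\bigr) ^{ \theta }
= 2 ^{ \epsilon \theta k }\; 2 ^{ k (1 + \alpha - r)/r },
\end{equation*}
where the last equality is the elementary identity $ \alpha (1- \theta ) - \tfrac {1- \alpha } 2 \theta = \tfrac {1 + \alpha - r} r $, obtained by plugging in $ \theta = 2 - \tfrac 2r $. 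Since $ r > 1 + \alpha $, the exponent $ (1 + \alpha - r)/r $ is strictly negative; taking $ \epsilon = \tfrac {r - 1 - \alpha }{2 r} $ and using $ \theta < 1 $ one gets $ \epsilon \theta k + k (1 + \alpha - r)/r \le - \eta k $ with $ \eta := \tfrac {r - 1 - \alpha }{2 r} > 0 $. Unwinding $ B (f,g) = \lvert I\rvert ^{-1} \langle T_k f, g \rangle $ and $ \lVert h\rVert _{L ^{r} (\mu )} = \langle h\rangle _{I,r} $ yields precisely \eqref{e:calmost}.

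There is no real obstacle: the whole argument is choosing the interpolation parameter $ \theta = 2 - \tfrac 2r $ and checking a sign. The only point worth flagging is that the restriction $ r > 1 + \alpha $ in the statement is not technical slack—it is exactly the range in which the interpolated exponent $ (1+\alpha-r)/r $ is negative, so this method genuinely cannot be pushed down toward $ r = 1 $ (which is consistent with the separate Conjecture concerning the maximal function $ M_\alpha $). One should also double-check, as done above, that the $ \epsilon $ coming from Lemma~\ref{l:almost} is swallowed by the main gain, i.e. $ \epsilon \theta < (r-1-\alpha)/r $ for the chosen $ \epsilon $; this is immediate.
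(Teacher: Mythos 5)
Your proposal is correct and follows essentially the same route as the paper: interpolate the two bounds of Lemma~\ref{l:almost} (the $L^2\times L^2$ gain against the $L^1\times L^1$ loss), identify $r_0=1+\alpha$ as the exponent where the gain first becomes positive, and absorb the $\epsilon$-loss. Your version merely makes explicit the exponent $(1+\alpha-r)/r$ and the choice of $\eta$, which the paper leaves implicit.
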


\begin{proof}
This follows from Lemma~\ref{l:almost} by interpolation. 
The relevant interpolation parameter $ \theta _0$ at which we have  only an epsilon loss  in the interpolated estimate is given by 
\begin{align*}
(1-\theta_0)\alpha & = \theta_0 \frac {1- \alpha } 2 , 
\\
\textup{and then} \quad 
\frac 1 {r_0} &= \frac {1-\theta _0} 1 + \frac {\theta _0} 2 . 
\end{align*}
We see that $ r_0 = 1+ \alpha $. And so we conclude that for $ r_0 = 1+ \alpha < r < 2$, we have the required 
gain in the interpolated bound, which proves the Corollary. 
\end{proof}

We now turn to the weighted inequalities of Corollary~\ref{c:R}.  

\begin{proof}[Proof of Corollary~\ref{c:R}] 
For the deterministic Hilbert transform, we have the sharp bound of Petermichl \cite{MR2354322}, namely 
\begin{equation*}
\lVert H \;:\; \ell ^{p} (w) \mapsto \ell ^{p} (w)\rVert \lesssim [w] _{A_p} ^{ \max \{1, \frac 1 {p-1}\}}. 
\end{equation*}
So, it remains to bound the terms in \eqref{e:calmost}. By Proposition~\ref{p:scale}, we then need to see that 
the  hypotheses on $ w$, namely  \eqref{e:WW},  imply that for some choice of $ r > 1+ \alpha $, we have 
\begin{equation}\label{e:ww}
w \in A_p, \quad w \in RH _{r}, \quad  \sigma = w ^{1- p'} \in RH _{r}.  
\end{equation}

Recall that $ v \in A_q \cap RH _{s} $ if and only if $ v ^{s} \in A _{s(q-1)+1}$.  
Now, by assumption, $ w  ^{1+ \alpha }\in A _{ (1+ \alpha ) (p-1)+1}$.  So, there is a $ t>1$ so that 
$  w  ^{t(1+ \alpha )}\in A _{ (1+ \alpha ) (p-1)+1}$, and the $ A_q$ classes increase in $ q$, so we conclude that 
$ w \in A_p \cap  RH _{r} $, for a $ r > 1+ \alpha $.  

The second hypothesis is $ w  \in A _{1 + \frac 1 {(1+\alpha) (p'-1)}} $. This is equivalent to 
\begin{equation*}
(w ^{(1-p')}) ^{ 1+\alpha } \in A _{ (1+\alpha ) (p'-1) +1 }. 
\end{equation*}
Now, $ w ^{1- p'} = \sigma $ is the dual weight. So by the argument in the previous paragraph, $ \sigma \in RH _{r}$, for some $ r > 1+ \alpha $.  So the proof is complete. 
\end{proof}

\section{Sparse Bounds and Weighted Inequalities} \label{s:wtd}

Let us recall the weighted estimates that we need for our corollaries.  A  function $ w >0$ is a \emph{Muckenhoupt $ A_p$ weight}  if 
\begin{equation*}
[w] _{A_p} = \sup _{Q} \Bigl[\frac { w ^{\frac 1 {1-p}} (Q) } {\lvert  Q\rvert }\Bigr] ^{p-1} \frac {w (Q)} {\lvert  Q\rvert  } < \infty . 
\end{equation*}
Above, we are conflating $ w$ as a measure and a density, thus $w ^{\frac 1 {1-p}} (Q)  =\int _{Q} w (x) ^{\frac 1 {1-p}} \;dx $.  
 We have these estimates, which are sharp in the $ A_p$ characteristic.  They are an element  of the sparse proof of the $ A_2$ conjecture.  (See \cite{MR3085756} for a proof.)

\begin{priorResults}\label{t:wts}  These estimates hold for all $ 1  < p < \infty $. 
\begin{equation*}
 \lVert \Lambda _{1,1} \;:\; L ^{p} (w) \mapsto L ^{p} (w) \rVert  \lesssim [w] _{A _{p}} ^{\max \{1, \frac 1 {p-1}\}}  .
\end{equation*}

\end{priorResults}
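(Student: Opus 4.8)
The plan is the standard route for sharp weighted bounds for positive dyadic operators: reduce the operator estimate to a bilinear estimate for the sparse form with the dual weight inserted, and then run a two-sided stopping argument; at the level of sharp constants this is Lacey's argument when $p=2$ and the $A_p$--$A_\infty$ argument of Hyt\"onen--P\'erez for general $p$.

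\textbf{Reduction.} Let $\sigma = w^{1-p'}$ denote the dual weight, and write $\langle h\rangle^{\mu}_{Q} := \mu(Q)^{-1}\int_{Q} h\,d\mu$ for a $\mu$-average and $\langle h\rangle_{Q}$ for the Lebesgue average. By $L^{p}(w)$--$L^{p'}(\sigma)$ duality together with the substitutions $f\mapsto f\sigma$, $g\mapsto gw$ — harmless since $\lVert f\sigma\rVert_{L^{p}(w)} = \lVert f\rVert_{L^{p}(\sigma)}$ and $\lVert gw\rVert_{L^{p'}(\sigma)} = \lVert g\rVert_{L^{p'}(w)}$ — the assertion is equivalent to
\begin{equation*}
\Lambda_{1,1}(f\sigma, gw)\;\lesssim\;[w]_{A_p}^{\max\{1,\frac{1}{p-1}\}}\,\lVert f\rVert_{L^{p}(\sigma)}\lVert g\rVert_{L^{p'}(w)},\qquad f,g\ge 0.
\end{equation*}
Since $\Lambda_{1,1}$ is symmetric in its two arguments and $[\sigma]_{A_{p'}} = [w]_{A_p}^{p'-1}$, it suffices to treat $p\ge 2$, where the target exponent is $1$; the range $1<p<2$ then follows by applying that case to the pair $(\sigma, p')$. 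For brevity I suppress below the dilates $3Q$ appearing in the definition of $\Lambda_{1,1}$, which cause only cosmetic changes.

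\textbf{The mechanism, shown at $p=2$.} Fix the dyadic grid containing $\mathcal S$ and build a $\sigma$-stopping family $\mathcal F$ adapted to $f\ge 0$: put the maximal cubes of $\mathcal S$ into $\mathcal F$, and recursively adjoin the maximal $Q'\subsetneq Q$ (for $Q\in\mathcal F$) at which $\langle f\rangle^{\sigma}_{Q'} > 2\langle f\rangle^{\sigma}_{Q}$; build $\mathcal G$ the same way for $g$ with $w$ in place of $\sigma$. For $Q\in\mathcal S$, let $\pi_{\mathcal F}Q$, $\pi_{\mathcal G}Q$ be the minimal members of $\mathcal F$, $\mathcal G$ containing $Q$, so that $\langle f\rangle^{\sigma}_{Q}\le 2\langle f\rangle^{\sigma}_{\pi_{\mathcal F}Q}$ and $\langle g\rangle^{w}_{Q}\le 2\langle g\rangle^{w}_{\pi_{\mathcal G}Q}$. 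Using $\langle f\sigma\rangle_{Q} = \tfrac{\sigma(Q)}{|Q|}\langle f\rangle^{\sigma}_{Q}$ and $\langle gw\rangle_{Q} = \tfrac{w(Q)}{|Q|}\langle g\rangle^{w}_{Q}$, the $A_2$ bound $\tfrac{\sigma(Q)}{|Q|}\tfrac{w(Q)}{|Q|}\le [w]_{A_2}$, and the sparseness $|Q|\le c^{-1}|E_Q|$, one obtains
\begin{align*}
\Lambda_{1,1}(f\sigma, gw)
&= \sum_{Q\in\mathcal S}\frac{\sigma(Q)w(Q)}{|Q|}\,\langle f\rangle^{\sigma}_{Q}\langle g\rangle^{w}_{Q}
\;\le\; 4[w]_{A_2}\sum_{Q\in\mathcal S}|Q|\;\langle f\rangle^{\sigma}_{\pi_{\mathcal F}Q}\langle g\rangle^{w}_{\pi_{\mathcal G}Q}\\
&\le\; \frac{4[w]_{A_2}}{c}\int_{\mathbb R^{n}} M^{\sigma}_{\mathcal D}f\cdot M^{w}_{\mathcal D}g\;dx,
\end{align*}
where $M^{\mu}_{\mathcal D}$ is the dyadic maximal function for $\mu$, and the last inequality uses $\langle f\rangle^{\sigma}_{\pi_{\mathcal F}Q}\le M^{\sigma}_{\mathcal D}f(x)$ and $\langle g\rangle^{w}_{\pi_{\mathcal G}Q}\le M^{w}_{\mathcal D}g(x)$ for $x\in E_Q$ together with the disjointness of the sets $E_Q$. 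Since $\sigma w\equiv 1$ when $p=2$, Cauchy--Schwarz bounds the last integral by $\lVert M^{\sigma}_{\mathcal D}f\rVert_{L^{2}(\sigma)}\lVert M^{w}_{\mathcal D}g\rVert_{L^{2}(w)}$, and each factor is controlled by $\lVert f\rVert_{L^{2}(\sigma)}$, resp.\ $\lVert g\rVert_{L^{2}(w)}$, since the weighted dyadic maximal operator is bounded on $L^{2}$ of its own measure with an absolute constant. This gives the bound $[w]_{A_2}$.

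\textbf{General $p$, and the main obstacle.} For $p\ne 2$ the scalar $\frac{\sigma(Q)w(Q)}{|Q|}$ no longer collapses to $[w]_{A_p}|Q|$: pulling out one power of $[w]_{A_p}$ leaves a residual power of $\sigma(Q)/|Q|$ (if $p<2$) or of $w(Q)/|Q|$ (if $p>2$), and disposing of that residual by a plain H\"older split of the sum over $\mathcal S$ re-imports both $[\sigma]_{A_\infty}$ and $[w]_{A_\infty}$ and yields only a non-sharp exponent (for instance $\tfrac{3}{2}$ when $p=2$). Reaching the sharp $\max\{1,\frac{1}{p-1}\}$ is precisely the delicate point; it is handled by the $A_p$--$A_\infty$ argument of Hyt\"onen--P\'erez, in which one keeps the parallel stopping cubes but absorbs the residual using the \emph{sharp} reverse H\"older inequality — equivalently, the fact that a sparse collection is $[\mu]_{A_\infty}$-Carleson with respect to any $\mu\in A_\infty$ — and the sharp Carleson embedding theorem, arriving at
\begin{equation*}
\lVert \Lambda_{1,1}\rVert_{L^{p}(w)\to L^{p}(w)}\;\lesssim\;\bigl([w]_{A_p}[\sigma]_{A_\infty}\bigr)^{1/p} + \bigl([\sigma]_{A_{p'}}[w]_{A_\infty}\bigr)^{1/p'}.
\end{equation*}
Since $[w]_{A_\infty}\le[w]_{A_p}$ and $[\sigma]_{A_\infty}\le[\sigma]_{A_{p'}} = [w]_{A_p}^{p'-1}$, the right side is $\lesssim [w]_{A_p}^{\max\{1,\frac{1}{p-1}\}}$, as claimed. (A non-sharp version of the theorem is in any case immediate from the $p=2$ case by Rubio de Francia extrapolation.) In short, $L^{p}(w)$-boundedness itself is easy; all the work lies in not letting the exponent of $[w]_{A_p}$ exceed $\max\{1,\frac{1}{p-1}\}$.
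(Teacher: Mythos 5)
The paper does not actually prove this statement: it is listed as a prior result (Theorem~\ref{t:wts}) and justified only by the remark that it is ``an element of the sparse proof of the $A_2$ conjecture'' with a citation to Lerner's paper \cite{MR3085756}. So there is no internal proof to compare against, and your task was effectively to supply the standard argument; you have done so correctly. Your duality reduction (inserting $\sigma=w^{1-p'}$, using $[\sigma]_{A_{p'}}=[w]_{A_p}^{p'-1}$ and the symmetry of $\Lambda_{1,1}$ to restrict to $p\ge 2$) is the usual one, and your $p=2$ argument is complete and correct --- indeed the parallel stopping families are not even needed there, since $\langle f\rangle^{\sigma}_{Q}\le M^{\sigma}_{\mathcal D}f(x)$ already holds for every $x\in E_Q$, after which the $A_2$ condition, disjointness of the $E_Q$, Cauchy--Schwarz with $\sigma w\equiv 1$, and the universal $L^2(\mu)$ bound for $M^{\mu}_{\mathcal D}$ finish the estimate with constant $[w]_{A_2}$. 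For general $p$ you correctly identify that the naive extension loses sharpness and you defer to the mixed $A_p$--$A_\infty$ bound $([w]_{A_p}[\sigma]_{A_\infty})^{1/p}+([\sigma]_{A_{p'}}[w]_{A_\infty})^{1/p'}$, whose reduction to $[w]_{A_p}^{\max\{1,1/(p-1)\}}$ you verify correctly. The only caveat is that this last step is itself a citation rather than a proof (the Carleson-embedding/sharp-reverse-H\"older mechanism is named but not executed); that is entirely in keeping with how the paper treats the statement, but worth flagging if you intend the writeup to be self-contained. A marginally more elementary route to the same sharp exponent, avoiding $A_\infty$ constants altogether, is the Cruz-Uribe--Martell--P\'erez argument, which for $p\ge 2$ extracts $[w]_{A_p}^{1/(p-1)}\cdot[w]_{A_p}^{1-1/(p-1)}$ directly from the $A_p$ product $w(Q)\sigma(Q)^{p-1}/|Q|^{p}$ and two applications of the universal maximal function bound.
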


For our applications, we have a second class of operators, a simplified form of those introduced by Benau-Bernicot-Petermichl 
\cite{160506401}.  For our purposes, we need a much simplified version of their result.  Define an additional characteristic of a weight, namely the \emph{reverse H\"older} property.  
\begin{equation}\label{e:RH}
[w] _{RH_r} = \sup _{ Q}   \frac {\langle w  \rangle _{Q, r}} {\langle w \rangle _{Q}}.  
\end{equation}

\begin{proposition}\label{p:scale} Fix an integer $ k$, and $ 1< r < 2$.  We have the bound below for all 
$ w\in A_p$, where $ r \leq p \leq r' = \frac r {r-1}$.  
\begin{equation}\label{e:scale}
\sum_{ Q \in \mathcal D \;:\; \lvert  Q\rvert ={2 ^{nk}} } 
\langle f \rangle _{Q,r} \langle g \rangle _{Q,r} \lvert  Q\rvert 
\lesssim [w] _{A_p} ^{1/p} [w] _{RH_r} [ \sigma ] _{RH_r} \lVert f\rVert _{L ^{p} (w)} \lVert g\rVert _{L ^{p'} (w)} 
\end{equation}
where $ \sigma = w ^{1 - p'}$ is the `dual' weight to $ w$.  
\end{proposition}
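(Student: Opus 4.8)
\section*{Proof proposal for Proposition~\ref{p:scale}}

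The plan is to treat the left-hand side as a sparse-type form restricted to a single scale $\lvert Q\rvert = 2^{nk}$, and to reduce it to the already-known weighted bound for the genuine sparse form $\Lambda_{1,1}$ (Theorem~\ref{t:wts}), at the cost of the two reverse-H\"older characteristics. First I would rewrite the averages $\langle f\rangle_{Q,r}$ in terms of $L^1$ averages of the modified functions $F := f^{r}$ and $G := g^{r}$; more precisely, $\langle f\rangle_{Q,r} = \langle \lvert f\rvert^{r}\rangle_{Q,1}^{1/r}$, so the left side is $\sum_{Q} \langle F\rangle_{Q,1}^{1/r}\langle G\rangle_{Q,1}^{1/r}\lvert Q\rvert$, where the sum is over the cubes of $\mathcal D$ of a fixed side length (recalling that $\langle\cdot\rangle_{Q,1}$ secretly averages over $3Q$, which at a fixed scale only contributes a bounded overlap factor). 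Since all cubes have the same size, the collection is trivially pairwise disjoint, hence $2^{nk}\mathbb Z^{n}$-translates partition $\mathbb R^n$ up to the dilation by $3$; so this single-scale form is comparable to a sparse form with sparsity constant one.

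Next I would bring in the weight via a duality/H\"older argument exactly as in the standard proof of the $A_2$ theorem bound. Insert the dual weight: $\langle f\rangle_{Q,1} = \langle f\sigma^{-1}\cdot\sigma\rangle_{Q,1}$, and one wants to compare $\langle f\rangle_{Q,r}$ to $\langle f\sigma^{-1}\rangle_{Q,r}^{\sigma}$ — an $L^r(\sigma)$-average — paying a factor $\langle\sigma\rangle_{Q,r}/\langle\sigma\rangle_{Q}$-type quantity, i.e. $[\sigma]_{RH_r}$; symmetrically for $g$ against $w$, paying $[w]_{RH_r}$. Concretely, by H\"older with exponents $(r, r')$ inside the average over $Q$,
\begin{equation*}
\langle f \rangle_{Q,r}
= \Bigl(\langle \lvert f\rvert^{r}\sigma^{-1}\cdot\sigma\rangle_{Q,1}\Bigr)^{1/r}
\le \langle \lvert f/\sigma\rvert^{r}\sigma\rangle_{Q,1}^{1/r}\cdot \langle \sigma\rangle_{Q,1}^{?}
\end{equation*}
— here one has to be slightly careful with the exponents, but the upshot is the familiar estimate replacing $r$-averages of $f$ by $r$-averages of $f$ against the weight $\sigma$, with the error governed by the reverse H\"older constant of $\sigma$. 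After this substitution the form looks like a sparse $\Lambda_{r,r}$ acting between $L^r(\sigma)$ and $L^{r}(w)$; one then invokes the weighted boundedness of the sparse form. Because $r\le p\le r'$, the exponents line up: $L^{r}\hookrightarrow L^{p}$ via the $A_p/RH_r$ bookkeeping and the residual $[w]_{A_p}^{1/p}$ factor comes out of estimating the $\langle\cdot\rangle_Q$-averages of $w$ and $\sigma$ against each other through the $A_p$ inequality (this is precisely where the $1/p$ power and not a larger power of $[w]_{A_p}$ appears, since at a single scale the full $\max\{1,1/(p-1)\}$ machinery is not needed — a single application of H\"older/$A_p$ suffices).

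Alternatively, and perhaps cleaner, I would interpolate: prove the endpoint bounds $p=r$ and $p=r'$ directly and then interpolate in $p$. At $p = r$, one has $\langle f\rangle_{Q,r}^{r}\langle w\rangle_Q \le [w]_{RH_r}^{r}\langle f\rangle_{Q,r}^{r}\langle w^{r}\rangle_Q^{1/r}\cdots$, and a direct computation using disjointness of the fixed-scale cubes together with the $RH_r$ bounds on $w$ and $\sigma$ and a single use of the $A_p$ relation yields \eqref{e:scale}; the case $p=r'$ is dual. Stein--Weiss / Riesz--Thorin interpolation with change of measure then fills in $r<p<r'$, and one checks the $[w]_{A_p}^{1/p}$ dependence is preserved (it is, since the interpolation is linear in the bilinear form and the $RH_r$ factors are $p$-independent).

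The main obstacle I anticipate is purely bookkeeping: tracking the exact powers so that the reverse-H\"older constants appear to the first power each, the $A_p$ constant to the power $1/p$ (rather than something worse), and ensuring the $3Q$-versus-$Q$ discrepancy in the definition of $\langle\cdot\rangle_{Q,r}$ contributes only a dimensional constant at a fixed scale. The actual harmonic analysis — disjointness at one scale, a single H\"older inequality, one application of the $A_p$ and $RH_r$ definitions — is elementary; no stopping time or corona decomposition is needed precisely because the sum is over one scale only.
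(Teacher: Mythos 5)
Your overall strategy---dualize with $\sigma = w^{1-p'}$, exploit the disjointness of the cubes at a single fixed scale, extract the reverse H\"older constants of $w$ and $\sigma$, and apply the $A_p$ condition once per cube to obtain the power $[w]_{A_p}^{1/p}$---is the same as the paper's. But there is a genuine gap at the one step where the work happens, namely the display you leave with the exponent ``$?$''. The per-cube inequality you want, controlling $\langle f\rangle_{Q,r}$ by an $L^r(\sigma)$-average of $f/\sigma$ times a power of $\langle \sigma\rangle_{Q,1}$ at the cost of $[\sigma]_{RH_r}$, is false for general $f$: testing it on $f$ concentrated near a point $x_0\in Q$ shows it would force $\sigma(x_0)^{r-1}\lesssim \langle\sigma\rangle_{Q,1}^{r-1}$ pointwise on $Q$, an $A_1$-type condition that $RH_r$ does not supply. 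No choice of H\"older exponents repairs this; the reverse H\"older constant can legitimately be applied only to the weight alone, that is to $\langle\sigma\rangle_{Q,r}\le[\sigma]_{RH_r}\langle\sigma\rangle_{Q,1}$, not to a product $f\sigma$ with $f$ arbitrary.

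The missing idea, and the way the paper proceeds, is to first flatten $f$ and $g$: after dualizing, replace $f$ by the cube-constant function $f'=\sum_{Q}\mathbf 1_Q\bigl[\sigma(Q)^{-1}\int_Q\lvert f\rvert^r\,d\sigma\bigr]^{1/r}$, and similarly $g$ against $w$. Because the fixed-scale cubes are disjoint, Jensen gives $\lVert f'\rVert_{L^p(\sigma)}\le\lVert f\rVert_{L^p(\sigma)}$ provided $p\ge r$, and $\lVert g'\rVert_{L^{p'}(w)}\le\lVert g\rVert_{L^{p'}(w)}$ provided $p'\ge r$; this is precisely where the hypothesis $r\le p\le r'$ enters, and the fact that your argument never invokes that hypothesis is a symptom of the gap. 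Once $f$ and $g$ are reduced to indicators of a single cube, the whole estimate is the short chain $\langle\sigma\mathbf 1_Q\rangle_{Q,r}\langle w\mathbf 1_Q\rangle_{Q,r}\lvert Q\rvert\lesssim[\sigma]_{RH_r}[w]_{RH_r}[w]_{A_p}^{1/p}\,\sigma(Q)^{1/p}w(Q)^{1/p'}$ followed by H\"older over the disjoint cubes. Your fallback---proving the endpoints $p=r$ and $p=r'$ and interpolating with change of measure---faces the same per-cube obstruction at the endpoints, and in addition the target constant $[w]_{A_p}^{1/p}$ itself varies with $p$, so the interpolation bookkeeping is not free; the direct argument above is both shorter and complete.
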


Let us recall these well known facts.  
\begin{enumerate}
\item  We always have $ [w] _{A_p}, [w] _{RH_r} \geq 1$. 

\item   For $ w\in A_p$ and $ \sigma = w ^{1-p'}$, the weight $ \sigma $ is locally finite, its `dual' weight is $ w$,  and $ [\sigma ] _{A _{p'} } = [w ] _{A_p} ^{p'-1}$. 

\item For every $ w \in A _{p}$ there is a 
$ r = r ([w] _{A_p}) >1$ so that $ w \in RH _{r}$. (In particular, we can take $ r$ so that $ r -1 \simeq [w] _{A_p} ^{-1} $, 
by \cite{MR3092729}*{Thm 2.3}. ) 

\item For every $ w \in A_p$, there is a $ r = r ([w] _{A_p}) >1$ so that $ w ^{r} \in A_p$.  

\item  We have $ w \in A_p \cap RH_r $ if and only if $ w ^{r} \in A _{r (p-1)+1}$, 
by \cite{MR1018575}.  
\end{enumerate}

\begin{proof}[Proof of Proposition~\ref{p:scale}] 
This inequality is rephrased in the self-dual way, namely setting $ \sigma = w ^{1-p'}$, it is equivalent to show that  for $ k\in \mathbb Z $, 
\begin{equation}\label{e:ss} 
\sum_{ \substack{Q  \in \mathcal D \\ \lvert  Q\rvert  =  2 ^{nk} }  }
\langle  f    \sigma  \rangle _{Q,r} \langle g w  \rangle _{Q,r} \lvert  Q\rvert 
\lesssim 
[w] ^{ \frac1p } _{A_p}  [ \sigma ] _{RH_r} [ w ] _{RH_r} \lVert f\rVert _{L ^{p} (\sigma ) } \lVert g\rVert _{L ^{p'} (w)}.
\end{equation}

Fix the integer $ k$.  We can assume that for $ \lvert  Q\rvert= 2 ^{nk} $, if $ f$ is not zero on $ Q$, then $ f \mathbf 1_{3Q \setminus Q} \equiv 0$, and we assume the same for $ g$. Then, set 
\begin{equation*}
f' =  \sum_{ Q  \in \mathcal D\;:\; \lvert  Q\rvert  =  2 ^{nk} }    \mathbf 1_{Q}  
\Bigl[
\frac 1 { \sigma (Q) } \int _{Q}  \lvert  f\rvert ^{r}  \; d \sigma 
\Bigr] ^{1/r}
\end{equation*}
and likewise for $ g'$.  It is  immediate that  $ \lVert f'\rVert _{L ^{p} ( \sigma )} \lesssim  \lVert f\rVert _{L ^{p} (\sigma )}$, 
thus in \eqref{e:ss}, it suffices to assume that $ f = f'$. Then, we can even assume that $ f $ and $ g$ are supported on a single cube $ Q$, and take the value 1 on that cube. 

Then, write 
\begin{align*}
\langle    \sigma \mathbf 1_{Q} \rangle _{Q,r} &\langle  w \mathbf 1_{Q}\rangle _{Q,r} \lvert  Q\rvert 
\leq 
[ \sigma ] _{RH_r} [ w ] _{RH_r} 
\langle    \sigma  \mathbf 1_{Q} \rangle _{Q,1} \langle  w \mathbf 1_{Q}\rangle _{Q,1} \lvert  Q\rvert 
\\
& \leq 
[ \sigma ] _{RH_r} [ w ] _{RH_r} 
\langle    \sigma \mathbf 1_{Q} \rangle _{Q,1}  ^{1/p'}\langle  w \mathbf 1_{Q}\rangle _{Q,1}  ^{1/p} 
 \cdot \sigma (Q) ^{1/p} w (Q) ^{1/p'} 
 \\
 & \leq  [ \sigma ] _{RH_r} [ w ] _{RH_r} [w] _{A_p} ^{1/p}\sigma (Q) ^{1/p} w (Q) ^{1/p'} .  
\end{align*}
This is the inequality claimed. 

\end{proof}

\begin{bibdiv}
\begin{biblist}

\bib{160506401}{article}{
  author={{Benea}, Cristina},
  author={{Bernicot}, Fr\'ed\'eric},
  author={Luque, Teresa},
  title={Sparse bilinear forms for Bochner Riesz multipliers and applications},
  eprint={http://arxiv.org/abs/1605.06401},
}

\bib{MR3531367}{article}{
   author={Bernicot, Fr{\'e}d{\'e}ric},
   author={Frey, Dorothee},
   author={Petermichl, Stefanie},
   title={Sharp weighted norm estimates beyond Calder\'on--Zygmund theory},
   journal={Anal. PDE},
   volume={9},
   date={2016},
   number={5},
   pages={1079--1113},
}

\bib{MR937581}{article}{
  author={Bourgain, J.},
  title={On the maximal ergodic theorem for certain subsets of the integers},
  journal={Israel J. Math.},
  volume={61},
  date={1988},
  number={1},
  pages={39--72},
  review={\MR {937581}},
}

\bib{MR2680392}{article}{
  author={Buczolich, Zolt{\'a}n},
  author={Mauldin, R. Daniel},
  title={Divergent square averages},
  journal={Ann. of Math. (2)},
  volume={171},
  date={2010},
  number={3},
  pages={1479--1530},
  review={\MR {2680392}},
}

\bib{MR883667}{article}{
  author={Chanillo, Sagun},
  author={Christ, Michael},
  title={Weak $(1,1)$ bounds for oscillatory singular integrals},
  journal={Duke Math. J.},
  volume={55},
  date={1987},
  number={1},
  pages={141--155},
  review={\MR {883667}},
}

\bib{14094351}{article}{
  author={{Conde-Alonso}, Jos{\'e} M.},
  author={Rey, Guillermo},
  title={A pointwise estimate for positive dyadic shifts and some applications},
  journal={Math. Ann.},
  year={2015},
  pages={1111Ð--1135},
}

\bib{160305317}{article}{
  author={{Culiuc}, A.},
  author={Di Plinio, F.},
  author={Ou, Y.},
  title={Domination of multilinear singular integrals by positive sparse forms},
  eprint={http://arxiv.org/abs/1603.05317},
}

\bib{160405506}{article}{
  author={{de Fran{\c c}a Silva}, F.~C.},
  author={Zorin-Kranich, P.},
  title={Sparse domination of noncentered variational operators},
  eprint={http://arxiv.org/abs/1604.05506},
}

\bib{MR2900003}{article}{
  author={Ding, Yong},
  author={Liu, Honghai},
  title={Uniform weighted estimates for oscillatory singular integrals},
  journal={Forum Math.},
  volume={24},
  date={2012},
  number={2},
  pages={223--238},
  review={\MR {2900003}},
}

\bib{MR2910762}{article}{
  author={Ding, Yong},
  author={Liu, Honghai},
  title={Weighted $L\sp p$ boundedness of Carleson type maximal operators},
  journal={Proc. Amer. Math. Soc.},
  volume={140},
  date={2012},
  number={8},
  pages={2739--2751},
  review={\MR {2910762}},
}

\bib{MR2949870}{article}{
  author={Folch-Gabayet, Magali},
  author={Wright, James},
  title={Weak-type $(1,1)$ bounds for oscillatory singular integrals with rational phases},
  journal={Studia Math.},
  volume={210},
  date={2012},
  number={1},
  pages={57--76},
  review={\MR {2949870}},
}

\bib{MR2115460}{article}{
  author={Grafakos, Loukas},
  author={Martell, Jos{\'e} Mar{\'{\i }}a},
  author={Soria, Fernando},
  title={Weighted norm inequalities for maximally modulated singular integral operators},
  journal={Math. Ann.},
  volume={331},
  date={2005},
  number={2},
  pages={359--394},
  review={\MR {2115460}},
}

\bib{MR3092729}{article}{
  author={Hyt{\"o}nen, Tuomas},
  author={P{\'e}rez, Carlos},
  title={Sharp weighted bounds involving $A\sb \infty $},
  journal={Anal. PDE},
  volume={6},
  date={2013},
  number={4},
  pages={777--818},
  review={\MR {3092729}},
}

\bib{151005789}{article}{
  author={{Hyt{\"o}nen}, T.~P.},
  author={Roncal, L.},
  author={Tapiola, O.},
  title={Quantitative weighted estimates for rough homogeneous singular integrals},
  eprint={http://arxiv.org/abs/1510.05789},
}

\bib{MR1018575}{article}{
  author={Johnson, R.},
  author={Neugebauer, C. J.},
  title={Change of variable results for $A_p$- and reverse H\"older ${\rm RH}_r$-classes},
  journal={Trans. Amer. Math. Soc.},
  volume={328},
  date={1991},
  number={2},
  pages={639--666},
  review={\MR {1018575}},
}

\bib{160901564}{article}{
   author = {{Krause}, B.},
   author ={{Lacey}, M.~T.},
    title = {A Weak Type Inequality for Maximal Monomial Oscillatory Hilbert Transforms},
   eprint = {1609.01564},
}

\bib{MR2576702}{article}{
   author={LaVictoire, Patrick},
   title={An $L^1$ ergodic theorem for sparse random subsequences},
   journal={Math. Res. Lett.},
   volume={16},
   date={2009},
   number={5},
   pages={849--859},
   issn={1073-2780},
   review={\MR{2576702}},
   doi={10.4310/MRL.2009.v16.n5.a8},
}

\bib{150105818}{article}{
  author={Lacey, Michael T.},
  title={An elementary proof of the $A_2$ Bound},
  date={2015},
  eprint={http://arxiv.org/abs/1501.05818 },
  journal={Israel J. Math., to appear},
}

\bib{MR2721744}{article}{
  author={Lerner, Andrei K.},
  title={A pointwise estimate for the local sharp maximal function with applications to singular integrals},
  journal={Bull. Lond. Math. Soc.},
  volume={42},
  date={2010},
  number={5},
  pages={843--856},
}

\bib{MR3085756}{article}{
  author={Lerner, Andrei K.},
  title={A simple proof of the $A_2$ conjecture},
  journal={Int. Math. Res. Not. IMRN},
  date={2013},
  number={14},
  pages={3159--3170},
}

\bib{MR3484688}{article}{
   author={Lerner, Andrei K.},
   title={On pointwise estimates involving sparse operators},
   journal={New York J. Math.},
   volume={22},
   date={2016},
   pages={341--349},
   issn={1076-9803},
   review={\MR{3484688}},
}

\bib{150805639}{article}{
  author={{Lerner}, A.~K. },
  author={Nazarov, F.},
  title={Intuitive dyadic calculus: the basics},
  eprint={http://arxiv.org/abs/1508.05639},
}

\bib{160401334}{article}{
  author={{Lerner}, A.~K. },
  author={Ombrosi, S.},
  author={Rivera-R{\'{\i }}os, I.~P.},
  title={On pointwise and weighted estimates for commutators of Calder$\backslash $'on-Zygmund operators},
  eprint={http://arxiv.org/abs/1604.01334},
}

\bib{MR3421994}{article}{
  author={Mirek, Mariusz},
  title={Weak type $(1,1)$ inequalities for discrete rough maximal functions},
  journal={J. Anal. Math.},
  volume={127},
  date={2015},
  pages={247--281},
  review={\MR {3421994}},
}

\bib{MR2354322}{article}{
  author={Petermichl, S.},
  title={The sharp bound for the Hilbert transform on weighted Lebesgue spaces in terms of the classical $A_p$ characteristic},
  journal={Amer. J. Math.},
  volume={129},
  date={2007},
  number={5},
  pages={1355--1375},
  review={\MR {2354322}},
}

\bib{MR822187}{article}{
  author={Ricci, F.},
  author={Stein, E. M.},
  title={Oscillatory singular integrals and harmonic analysis on nilpotent groups},
  journal={Proc. Nat. Acad. Sci. U.S.A.},
  volume={83},
  date={1986},
  number={1},
  pages={1--3},
  review={\MR {822187}},
}

\bib{MR890662}{article}{
  author={Ricci, Fulvio},
  author={Stein, E. M.},
  title={Harmonic analysis on nilpotent groups and singular integrals. I. Oscillatory integrals},
  journal={J. Funct. Anal.},
  volume={73},
  date={1987},
  number={1},
  pages={179--194},
  review={\MR {890662}},
}

\bib{MR1325697}{article}{
  author={Rosenblatt, Joseph M.},
  author={Wierdl, M{\'a}t{\'e}},
  title={Pointwise ergodic theorems via harmonic analysis},
  conference={ title={Ergodic theory and its connections with harmonic analysis (Alexandria, 1993)}, },
  book={ series={London Math. Soc. Lecture Note Ser.}, volume={205}, publisher={Cambridge Univ. Press, Cambridge}, },
  date={1995},
  pages={3--151},
  review={\MR {1325697}},
}

\bib{MR1782909}{article}{
  author={Sato, Shuichi},
  title={Weighted weak type $(1,1)$ estimates for oscillatory singular integrals},
  journal={Studia Math.},
  volume={141},
  date={2000},
  number={1},
  pages={1--24},
  review={\MR {1782909}},
}

\bib{MR1879821}{article}{
  author={Stein, Elias M.},
  author={Wainger, Stephen},
  title={Oscillatory integrals related to Carleson's theorem},
  journal={Math. Res. Lett.},
  volume={8},
  date={2001},
  number={5-6},
  pages={789--800},
  review={\MR {1879821}},
}

\bib{MR2318621}{article}{
  author={Urban, Roman},
  author={Zienkiewicz, Jacek},
  title={Weak type $(1,1)$ estimates for a class of discrete rough maximal functions},
  journal={Math. Res. Lett.},
  volume={14},
  date={2007},
  number={2},
  pages={227--237},
  review={\MR {2318621}},
}

\end{biblist}
\end{bibdiv}

\end{document}